\def\tbcaption{\def\@captype{table}\caption}
\newtheorem{theorem}{Theorem}[section]
\newtheorem{lemma}[theorem]{Lemma}
\newtheorem{cor}[theorem]{Corollary}
\theoremstyle{definition}
\newtheorem{proposition}[theorem]{Proposition}
\newtheorem*{lemma*}{Lemma}
\theoremstyle{remark}
\theoremstyle{plain}
\newtheorem{conj}[theorem]{Conjecture}
\newtheorem*{rem}{Remark}
\title{On Extensions of $D(4)$-triples by Adjoining Smaller Elements}
\author{Marija {Bliznac Trebje\v{s}anin}}
\address{University of Split, Faculty of Science\\ Rudera Boskovica 33, 21000 Split, Croatia}
\email{marbli@pmfst.hr}
\author{Pavao Radi\'{c}}
\address{University of Split, Faculty of Science\\ Rudera Boskovica 33, 21000 Split, Croatia}
\email{pradic@pmfst.hr}
\begin{document}
\begin{abstract} 
This paper examines the problem of obtaining a $D(4)$-quadruple by adding a smaller element to a $D(4)$-triple. We prove some relations between elements of observed hypothetical $D(4)$-quadruples under which a conjecture of the uniqueness of such an extension holds. Also, it is shown that for any $D(4)$-triple there are at most two extensions with a smaller element.
\end{abstract}

\maketitle

\noindent{\it 2020 {Mathematics Subject Classification:}} 11D09, 11B37, 11J68

\noindent{\it Keywords}: Diophantine $m$-tuples, Pellian equations, hypergeometric method.

\section{Introduction}

Let $n\neq0$ be an integer. We call a set of $m$ distinct positive integers a $D(n)$-$m$-tuple, or $m$-tuple with the property $D(n)$, if the product of any two of its distinct elements increased by $n$ is a perfect square.

One of the most frequently studied questions is how large these sets can be. In the classical case $n=1$, first studied by Diophantus, Dujella proved in \cite{duje_kon} that a $D(1)$-sextuple does not exist and that there are at most finitely many quintuples. Over the years, many authors have improved the upper bound for the number of $D(1)$-quintuples. Finally, in \cite{petorke}, He, Togb\'{e} and Ziegler proved the nonexistence of $D(1)$-quintuples. 

Similar conjectures and observations can be made for the case $n=4$ as for the case $n=1$.
In view of this observation, Filipin and the first author proved in \cite{nas2} that a $D(4)$-quintuple also does not exist.

In both cases, $n=1$ and $n=4$, conjectures about the uniqueness of an extension of a triple to a quadruple with a larger element are still open. 

 Consider a $D(4)$-triple $\{a,b,c\}$ and define numbers $d_+$ and $d_-$ with
 $$d_{\pm}=d_{\pm}(a,b,c)=a+b+c+\frac{1}{2}(abc\pm \sqrt{(ab+4)(ac+4)(bc+4)}).$$
 Then $\{a,b,c\}$ can be extended to a $D(4)$-quadruple with an element $d=d_+$, which is a number greater than $\max\{a,b,c\}$.
A $D(4)$-quadruple $\{a,b,c,d_+(a,b,c)\}$ is called a regular quadruple. If $\{a,b,c,d\}$ is a $D(4)$-quadruple such that $d>\max\{a,b,c\}$ and $d\neq d_+(a,b,c)$ then it is called an irregular quadruple.

\begin{conj}\label{conj_1}
Any $D(4)$-triple $\{a,b,c\}$ has a unique extension to a Diophantine quadruple $\{a,b,c,d\}$ by an element $d>\max\{a,b,c\}$.
\end{conj}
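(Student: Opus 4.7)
The plan is to follow the Pell-equation/linear-forms-in-logarithms strategy developed by Dujella and collaborators for the analogous $D(1)$ conjecture, adapted to the $D(4)$ setting. Assume for contradiction that $\{a,b,c,d\}$ and $\{a,b,c,d'\}$ are two $D(4)$-quadruples with $\max\{a,b,c\} < d < d'$, so that at least one of them is irregular. Writing $ad+4=x^2$, $bd+4=y^2$, $cd+4=z^2$ and eliminating $d$ pairwise produces the Pellian equations
\[
az^2 - cx^2 = 4(a-c), \qquad bz^2 - cy^2 = 4(b-c).
\]
Their positive integer solutions organize into finitely many binary recurrence sequences, and the common $z$-value forces an equality $z = v_m = w_n$ across two such sequences.

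Next I would perform the congruence step: reducing the recurrences modulo $c$ (and then modulo $c^2$) forces $m \equiv n \pmod{2}$, pins down the admissible pairs of initial fundamental solutions, and yields a lower bound of the shape $d > c^{3-\varepsilon}$ for any irregular extension. The regular extension $d = d_+(a,b,c)$ is precisely the one realized at the smallest admissible indices, so any further extension must come from a genuinely higher index.

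Third, I would convert the identity $v_m = w_n$ into a linear form in three logarithms of algebraic numbers coming from the fundamental units of the two Pell equations, and apply Matveev's lower bound to obtain an upper estimate of the form $m \ll \log c$. Combining this with the lower bound from the congruence step confines $c$ to a finite computable range; a Baker--Davenport-type reduction then eliminates the residual cases, leaving only $d = d_+$ and contradicting the existence of $d'$.

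The main obstacle --- and the reason Conjecture~\ref{conj_1} remains open in full generality --- is that the congruence-based lower bound and the transcendence-based upper bound do not close uniformly in $(a,b,c)$. When $a$ is very small relative to $b$ and $c$, Matveev's estimate is too weak to beat the corresponding lower bound, and an irregular $d'$ slightly above $d_+$ cannot be ruled out. Either a refined gap principle pushing the lower bound up to the full $d > c^3$ across the whole parameter space, or replacing linear forms in logarithms with the hypergeometric/Thue--Siegel method employed later in this paper for the dual problem of extensions by \emph{smaller} elements, would be required to close this gap.
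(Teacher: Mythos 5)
The statement you are asked to prove is Conjecture~\ref{conj_1}, which the paper does not prove and explicitly describes as open (``conjectures about the uniqueness of an extension of a triple to a quadruple with a larger element are still open''). The paper's only result involving it is Corollary~\ref{cor: conject}, which uses Conjecture~\ref{conj_1} as a \emph{hypothesis} to deduce Conjecture~\ref{conj_2}; no proof of Conjecture~\ref{conj_1} exists in the paper for comparison.

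Your submission is not a proof, and to your credit you say so yourself: the final paragraph concedes that the congruence-based lower bound and the linear-forms upper bound ``do not close uniformly in $(a,b,c)$'' and that the conjecture ``remains open in full generality.'' That concession is the gap, and it is fatal: everything before it is a correct survey of the standard machinery (the system of Pellian equations, the recurrences $v_m=w_n$, congruences modulo $c$ and $c^2$, gap principles, Baker-type bounds, Baker--Davenport reduction), which is indeed the machinery this paper and its references (\cite{mbt}, \cite{nas2}, \cite{fil_xy4}) deploy --- but that machinery is known to settle the conjecture only under additional hypotheses on the relative sizes of $a$, $b$, $c$ (e.g.\ the regimes $c<0.25b^3$ or $c\geq 39247b^4$ cited in the introduction, and the families in \cite{bf}, \cite{fil_par}, \cite{fht}). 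An outline of a strategy together with an accurate explanation of why the strategy fails in the remaining parameter range cannot be accepted as a proof of the statement. If the intent was to prove a conditional or partial result, the statement being proved would need to be restated with the corresponding hypotheses; as written, the claim is unproven.
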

 
Results which support this conjecture can be found, for example, in \cite{bf}, \cite{mbt}, \cite{dujram}, \cite{fil_par} and \cite{fht}.

If $a<b<c$ and $c\neq a+b+2r$ then $0<d_-(a,b,c)<c$. Moreover, $\{a,b,c,d_-(a,b,c)\}$ is another extension to a $D(4)$-quadruple. Notice that in this case $c=d_+(a,b,d_-(a,b,c))$.

In the case of $D(1)$-triples, Cipu, Dujella and Fujita studied in \cite{cdf} extensions of the triples by adjoining an element smaller than all elements of the given triple. Similarly, the first author in \cite{mbtap1} initiated a study of the analogous problem in the case of $D(4)$-triples. 

\begin{conj}\label{conj_2}
    Suppose that $\{a_1, b, c, d\}$ is a $D(4)$-quadruple with $a_1<b<c<d$. Then, $\{a_2, b, c, d\}$ is not a $D(4)$-quadruple for any
integer $a_2$ with $a_1\neq a_2<b$.
\end{conj}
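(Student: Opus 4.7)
The plan is to attack Conjecture~\ref{conj_2} by the standard machinery of simultaneous Pell equations combined with Baker-type bounds and a gap principle, following the broad strategy that Cipu--Dujella--Fujita use in \cite{cdf} for the $D(1)$ analogue. I would assume for contradiction that two distinct integers $a_1, a_2$ with $a_1<a_2<b$ both yield $D(4)$-quadruples with $\{b,c,d\}$. Writing $a_i b+4=x_i^2$, $a_i c+4=y_i^2$, $a_i d+4=z_i^2$ (together with $bc+4=r^2$, $bd+4=s^2$, $cd+4=t^2$) converts the problem into a finite union of simultaneous Pell systems: each admissible $a_i$ corresponds to a pair of fundamental solutions whose subsequent solutions satisfy binary linear recurrences, so $a_1, a_2$ produce two distinct index pairs $(m_i,n_i)$ in those recurrences, and the task is to show that no such pair of pairs can occur.

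The next step is to exploit the paper's own results. Its main theorem---that a $D(4)$-triple admits at most two smaller extensions---already locates $a_1$ and $a_2$ inside two specific parametric families, while the partial uniqueness statements (the relations between elements of the hypothetical quadruple referenced in the abstract) force strong congruences among the indices $m_i, n_i$ modulo $a_i$, $b$, $r$, $s$, $t$. A gap principle of the kind exploited in \cite{cdf} should show that two smaller extensions, if they exist, are forced apart in size, providing simultaneously an upper bound on $a_2$ and a polynomial lower bound on $b$.

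Once the indices are driven large enough, I would apply Matveev's bound for nonzero linear forms in three logarithms (or a Laurent-style sharpening) to obtain an absolute upper bound on the dominant index, followed by a Baker--Davenport reduction to cut the remaining range to one amenable to explicit computation. The hypergeometric method mentioned in the paper's keywords, in Rickert's formulation applied to the triple of nearly rational approximations $\sqrt{1+4/(a_i b)}$, $\sqrt{1+4/(a_i c)}$, $\sqrt{1+4/(a_i d)}$, is a natural alternative whenever $b,c,d$ share useful common factors and yields bounds that do not depend on ineffective linear-forms estimates.

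The step I expect to be the genuine obstacle is the regime in which $a_1$ and $a_2$ are comparable in size to each other and both very small relative to $b$: there the gap principle is weakest, the Pellian congruences furnish only lower bounds polynomial in $b$, and the linear-forms input inflates the upper bound beyond what reduction can easily clear. Closing this middle range will most likely require either a $D(4)$-specific refinement of the effective estimates or a careful case analysis tailored to the two parametric families produced by the at-most-two theorem, rather than a direct transposition of the $D(1)$ argument of \cite{cdf}.
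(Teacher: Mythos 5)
The statement you were asked to prove is labeled Conjecture \ref{conj_2} in the paper, and the paper does not prove it: it is left open. What the paper establishes are partial results --- validity of the conjecture for $c<0.25b^3$ and for $c\geq 39247b^4$ (quoted from earlier work), the relations of Theorem \ref{glavni}, the fact that both hypothetical quadruples must be irregular (Corollary \ref{cor: nereg}), that at most two smaller extensions exist (Corollary \ref{cor: najvisedva}), that only finitely many counterexamples are possible (Corollary \ref{cor: konacno}), and that Conjecture \ref{conj_1} implies Conjecture \ref{conj_2}. Your submission is likewise not a proof but a research plan, and you say so yourself: your final paragraph concedes that the regime where $a_1$ and $a_2$ are comparable to each other and both small relative to $b$ cannot be closed by the tools you list. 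That concession is precisely where the genuine difficulty lies, and it is why the paper stops at conditional and partial statements. A proposal that ends by naming an unresolved obstacle has a gap by definition; nothing in your outline supplies the missing idea, so no proof of Conjecture \ref{conj_2} results.

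On the technical content of the outline: the broad machinery you describe (simultaneous Pellian systems, recurrences $v_m=w_n$, a gap principle, Rickert-style hypergeometric estimates, and computation on the remaining range) is indeed what the paper uses for its partial results, though with different specifics --- the paper applies Rickert's theorem to $\sqrt{1+4a_2/N}$ and $\sqrt{1+4a_1/N}$ with $N=a_1a_2c$ (Theorem \ref{tm: gornje_ograde} and Lemma \ref{gornje_ograde2}), not to approximations built from $b$, $c$, $d$ separately, and it does not invoke Matveev's theorem or Baker--Davenport reduction at all. Your claim that the at-most-two result ``locates $a_1$ and $a_2$ inside two specific parametric families'' also overstates what that corollary gives: it is a pure counting statement, derived from the incompatible inequalities $c<0.25a_2^2b^3$ and $c>0.25a_2^2b^3$ applied to three hypothetical extensions, and it provides no parametrization of the candidates that a subsequent argument could exploit. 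If you want a statement in the direction of Conjecture \ref{conj_2} that your methods can actually reach, aim at the partial results such as the bounds of Theorem \ref{glavni} or the finiteness assertion of Corollary \ref{cor: konacno}; the conjecture itself is beyond both your sketch and the paper.
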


Its validity is checked in \cite{mbtap1} for $c < 0.25b^3$ and is also true for $c\geq 39247b^4$ as a consequence of \cite[Theorem 1.6]{mbt}.

The main result of this paper is to give some conditions under which possible counterexamples to Conjecture \ref{conj_2} may exist.

\begin{theorem}\label{glavni}
    Assume that $\{a_1,b,c,d\}$ and $\{a_2,b,c,d\}$ are $D(4)$-quadruples with $a_1<a_2<b<c<d$. Then the following statements hold:
    \begin{enumerate}[1)]
    \item $a_2>4a_1$.
        \item $a_2>0.0251a_1^2$. 
        \item If $a_1\geq 2$ then $b<a_2^2$ and $a_2\geq 317$.
        \item $0.25a_1^2b^3 < c < 0.25a_2^2b^3$.
    \end{enumerate}
\end{theorem}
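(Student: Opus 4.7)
The strategy is that both $a_1$ and $a_2$ are bottom extensions of the common $D(4)$-triple $\{b,c,d\}$. For each $i\in\{1,2\}$ I write
\[a_ib+4=r_i^2,\qquad a_ic+4=s_i^2,\qquad a_id+4=t_i^2\]
for positive integers $r_i,s_i,t_i$, and combine these into the Pell-type identities
\[cr_i^2-bs_i^2=4(c-b),\qquad dr_i^2-bt_i^2=4(d-b),\qquad ds_i^2-ct_i^2=4(d-c).\]
Thus $(r_i,s_i,t_i)$ are two distinct solutions of the same simultaneous system, and every estimate will come from pitting them against each other. For Part (1), the identity $b(a_2-a_1)=(r_2-r_1)(r_2+r_1)$ together with its analogues for $c$ and $d$ and with $a_i<b$ restricts how close $a_2$ and $a_1$ can be; exploiting $r_i^2\equiv4\pmod b$ (and similar congruences mod $c$, $d$) to place each $r_i$ in a narrow residue class and then comparing sizes should force $r_2-r_1$ to be large relative to $r_1$, translating into $a_2>4a_1$.

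For Part (2), the quadratic gap $a_2>0.0251\,a_1^2$ requires bringing the third Pell equation genuinely into play: two small solutions of a triple of simultaneous Pell equations are necessarily much farther apart than two solutions of just a pair. I would either invoke the recursive description of all solutions (with an explicit gap principle controlling the initial terms) or apply a hypergeometric/Thue-type estimate, consistent with the tools announced in the keywords. The sharpness of the constant $0.0251$ suggests an effective rather than asymptotic argument.

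Parts (3) and (4) should follow by feedback. For Part (4), the result of \cite{mbtap1} establishes Conjecture \ref{conj_2} in the regime $c<0.25\,b^3$; re-running that argument with $a_1$ playing the role of $1$ should produce $c>0.25\,a_1^2b^3$, and the corresponding computation for $a_2$ (together with the large-$c$ results stemming from \cite{mbt}) should give the upper bound $c<0.25\,a_2^2b^3$. Part (3) then combines Part (2) with the lower bound from Part (4): assuming $b\ge a_2^2$ with $a_1\ge 2$ forces $a_2$ into a finite range compatible with the quadratic gap, and eliminating these cases by a direct check should yield $a_2\ge317$.

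The decisive step is the quadratic gap in Part (2); Part (1) is elementary algebra, while Parts (3) and (4) become bookkeeping once Part (2) and the prior work of \cite{mbtap1} are in hand. Moving from a linear to a quadratic gap is precisely where genuine diophantine input, namely the exponential sparsity of solutions to simultaneous Pell systems, has to enter, and selecting the right effective tool (Bennett-style hypergeometric bounds, recursive gap principles, or a combination) will govern the final numerical constants.
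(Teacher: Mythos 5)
Your outline is a plan rather than a proof, and two of its load-bearing claims are wrong. The most serious is the assertion that Part (1) is ``elementary algebra.'' The identity $b(a_2-a_1)=(r_2-r_1)(r_2+r_1)$ and the congruences $r_i^2\equiv 4\pmod b$ carry no contradiction: with $r_i\approx\sqrt{a_ib}$ one has $r_2-r_1\approx(\sqrt{a_2}-\sqrt{a_1})\sqrt b$, which is perfectly consistent with, say, $a_2=3a_1$, so no amount of residue-class bookkeeping modulo $b$, $c$, $d$ will force $a_2>4a_1$. In the paper this statement is Proposition \ref{prop: prop1}, and it already requires the full machinery: the Rickert-type simultaneous approximation bound (Theorem \ref{tm: gornje_ograde}), the resulting upper bound for the index $n$ (Lemma \ref{lemma: gornja_n_1}), the congruence-based lower bound $n>0.09226\,b^{-1/2}c^{1/4}$ (Lemma \ref{lema: donja na n u terminima b i c}), an iteration that reduces to $a_2\leq 5179$, and a computer search over the solution sequences of the Pellian equation $(r_1a_2)^2-r_2^2(a_1a_2)=4a_2(a_2-a_1)$ for the surviving pairs. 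Moreover the case $a_1=1$ must be split off entirely (Lemma \ref{lem:a1je1}) because the gap principle needs $c>0.56b^3$, which the lower bound $c>0.25a_1^2b^3$ only supplies when $a_1\geq 2$; your plan never notices this obstruction, which also silently undermines your sketch of Part (3).

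The second error is in Part (4): the upper bound $c<0.25a_2^2b^3$ is not obtained by ``the corresponding computation for $a_2$.'' The lower bound $c>0.25a_1^2b^3$ does come from re-running \cite{mbtap1} with $0.25b^3$ replaced by $0.25a_1^2b^3$ (Theorem \ref{tm5.2}) and invoking the nonexistence of $D(4)$-quintuples \cite{nas2} --- that part of your plan matches the paper. But there is no symmetric statement producing an upper bound; the paper proves $c<0.25a_2^2b^3$ by assuming the contrary and deriving a contradiction from the same $n$-bounds plus finite search (Theorem \ref{tm5.1}, part 2). Your Part (2) correctly names the right class of tools (hypergeometric simultaneous approximation against a gap principle), but defers every quantitative step --- the choice of approximated numbers $\sqrt{1+4a_i/N}$ with $N=a_1a_2c$ and common denominator $a_1a_2z$, the case split $b\gtrless a_1^2$, and the unavoidable finite computer verification --- so nothing in the proposal actually establishes any of the four statements. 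Finally, your simultaneous system in $(r_i,s_i,t_i)$ with $\{b,c,d\}$ fixed is not the paper's setup (which takes $z$ with $cd+4=z^2$ as the shared unknown); it is closer to \cite{cdf2}, and while not inherently wrong, it would require its own gap principle between distinct solutions that you do not supply.
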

Note that one of the $D(4)$-quadruples, either $\{a_1, b, c, d\}$ or $\{a_2, b, c, d\}$, must be irregular. If we assume both are regular quadruples, then we would have $d = d_+(a_1, b, c) = d_+(a_2, b, c)$, which implies $a_1 = a_2$, a contradiction. The following
consequence of Theorem \ref{glavni}, 
shows that an even stronger claim must hold, i.e. both of those quadruples must be irregular.

\begin{cor}\label{cor: nereg}
   If $\{a_1,b,c,d\}$ and $\{a_2,b,c,d\}$ are $D(4)$-quadruples, $a_1<a_2<b<c<d$, then both of them are irregular $D(4)$-quadruples.
\end{cor}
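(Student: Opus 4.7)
I would argue by contradiction, leveraging the remark that immediately precedes the corollary. That remark already rules out the case in which \emph{both} quadruples are regular: if $d = d_+(a_1,b,c) = d_+(a_2,b,c)$, then since the defining formula shows $d_+(\cdot,b,c)$ is strictly increasing in its first argument, we would get $a_1 = a_2$. So the only scenario still to exclude is the ``mixed'' one where exactly one of the two quadruples is regular. Suppose, to reach a contradiction, that $\{a_i,b,c,d\}$ is regular for some $i \in \{1,2\}$, and write $j = 3-i$. Then $d = d_+(a_i,b,c)$, whereas $\{a_j,b,c,d\}$ is an irregular $D(4)$-quadruple; by strict monotonicity, $d = d_+(a_i,b,c) \ne d_+(a_j,b,c)$.

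Phrased in terms of the triple $\{a_j,b,c\}$, we would then have two distinct extensions of this triple to a $D(4)$-quadruple by an element exceeding $c$: the regular extension $d_+(a_j,b,c)$, and the element $d = d_+(a_i,b,c)$. This is exactly the kind of ``double large extension'' that Conjecture \ref{conj_1} forbids. The plan is therefore to invoke a size-conditional verification of Conjecture \ref{conj_1} that covers the range in which $\{a_j,b,c\}$ lives.

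Here the hypotheses of Theorem \ref{glavni} do the heavy lifting. Part (4) forces $0.25 a_1^2 b^3 < c < 0.25 a_2^2 b^3$, so $c$ is polynomially large in $b$; part (3) gives $b < a_2^2$ and $a_2 \ge 317$ as soon as $a_1 \ge 2$; and part (1) gives $a_2 > 4a_1$. These bounds place $\{a_j,b,c\}$ squarely in a regime where the uniqueness of the larger extension of a $D(4)$-triple has already been established in the literature (see the references \cite{bf}, \cite{mbt}, \cite{fil_par}, \cite{fht} cited in the introduction). Applying such a uniqueness result to $\{a_j,b,c\}$ forces $d = d_+(a_j,b,c)$, contradicting $d = d_+(a_i,b,c)$. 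Together with the remark, this shows that neither quadruple can be regular.

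The main obstacle I anticipate is the bookkeeping of matching the parameter range coming out of Theorem \ref{glavni} with the precise hypotheses of one of the existing partial verifications of Conjecture \ref{conj_1}. The size bounds produced by Theorem \ref{glavni} are reasonably generous ($a_2$ at least in the hundreds, $c$ at least cubic in $b$), so this should reduce to citing the appropriate prior theorem; the conceptual content of the proof is entirely encapsulated in the monotonicity argument combined with the structural result that a $D(4)$-triple in this range has a unique large extension.
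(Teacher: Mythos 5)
There is a genuine gap in your argument for the ``mixed'' case. You reduce it to the claim that the triple $\{a_j,b,c\}$ admits two distinct extensions by an element larger than $c$ (namely $d_+(a_j,b,c)$ and $d$), and then propose to contradict this by ``invoking a size-conditional verification of Conjecture~\ref{conj_1}'' covering the relevant range. No such result exists to invoke: Conjecture~\ref{conj_1} is open, and the partial results in \cite{bf}, \cite{mbt}, \cite{fil_par}, \cite{fht} verify uniqueness only for specific parametric families of triples (e.g.\ extensions of pairs $\{k-2,k+2\}$), not for all $D(4)$-triples lying in a size range of the kind produced by Theorem~\ref{glavni}. Indeed, the whole point of Corollary~\ref{cor: conject} at the end of the introduction is that Corollary~\ref{cor: nereg} lets one \emph{deduce} Conjecture~\ref{conj_2} \emph{from} Conjecture~\ref{conj_1}; if a uniqueness theorem strong enough for your step were already available, that implication would be vacuous in the covered range. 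So the heavy lifting you defer to ``bookkeeping'' is actually the missing substance of the proof. (A secondary issue: you lean on parts (1), (3), (4) of Theorem~\ref{glavni}, but in the paper this corollary is established in Section~\ref{sekcija_upper_c} using only the bound $c>0.25a_1^2b^3$ and $d>0.25a_1^2c^3$ from Corollary~\ref{cor: donja na c i d}; parts (1) and (3) are proved only later.)

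The paper's actual argument is much more elementary and avoids any appeal to uniqueness of extensions. From Corollary~\ref{cor: donja na c i d} one has $d>0.25a_1^2c^3$ and $c>0.25a_1^2b^3$, hence
\[
d>0.25a_1^2c^3>0.25a_1^2\bigl(0.25a_1^2b^3\bigr)^2c>0.25^3a_1^6b^4\cdot a_2bc,
\]
while the regular extension always satisfies $d_+(a_i,b,c)<a_ibc+4c$ because $a_ibc+4c>r_is_it$. Since $b>10^5$, the factor $0.25^3a_1^6b^4$ is enormous, so $d>d_+(a_2,b,c)>d_+(a_1,b,c)$ and $d$ cannot be the regular extension of either triple. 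If you replace your appeal to a uniqueness theorem with this direct size comparison, the proof goes through; your treatment of the ``both regular'' case via monotonicity of $d_+$ matches the remark in the paper and is fine, though it is in fact subsumed by the size argument.
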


It will also be proven that there are at most two extensions of a $D(4)$-triple $\{b, c, d\}$ with a smaller integer.

\begin{cor}\label{cor: najvisedva}
    Let $\{b, c, d\}$ be a $D(4)$-triple. Then, there exist at most two positive integers $a$ with $a < \min\{b,c,d\}$ such that $\{a,b,c,d\}$ is a $D(4)$-quadruple.
\end{cor}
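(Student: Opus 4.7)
The plan is to prove the corollary by contradiction, using part 4 of Theorem \ref{glavni} as a black box. Suppose that there exist three distinct positive integers $a_1<a_2<a_3$, each smaller than $\min\{b,c,d\}$, such that $\{a_i,b,c,d\}$ is a $D(4)$-quadruple for $i=1,2,3$. Since the $D(4)$-property depends only on the set, I would relabel the elements of the given triple so that $b<c<d$. Then the chain $a_i<b<c<d$ holds for every $i$, so every ordered pair $(a_i,a_j)$ with $i<j$ satisfies the hypothesis of Theorem \ref{glavni}.

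Next I would apply part 4 of that theorem to two consecutive pairs. Applied to $(a_1,a_2)$ it yields
$$0.25\,a_1^2 b^3 \;<\; c \;<\; 0.25\,a_2^2 b^3,$$
and applied to $(a_2,a_3)$ it yields
$$0.25\,a_2^2 b^3 \;<\; c \;<\; 0.25\,a_3^2 b^3.$$
The strict upper bound $c<0.25\,a_2^2 b^3$ coming from the first pair is incompatible with the strict lower bound $c>0.25\,a_2^2 b^3$ coming from the second, so no common value of $c$ can exist. This contradicts the assumption of three coexisting smaller extensions and forces the number of such extensions to be at most two.

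The main (and really the only) obstacle is ensuring that the hypothesis of Theorem \ref{glavni} is genuinely in force for each pair; this reduces to the harmless relabeling that puts $\{b,c,d\}$ in increasing order and to observing that each $a_i<\min\{b,c,d\}=b$ is automatically distinct from $b,c,d$. Once that bookkeeping is done, the heart of the argument is simply that the two-sided window for $c$ furnished by part 4 shifts strictly to the right as the smaller element grows, so the windows attached to $(a_1,a_2)$ and $(a_2,a_3)$ meet only at the single value $0.25\,a_2^2 b^3$, which the strict inequalities exclude. Hence at most two positive integers $a<\min\{b,c,d\}$ can extend $\{b,c,d\}$ to a $D(4)$-quadruple, as claimed.
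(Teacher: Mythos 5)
Your proof is correct and is essentially identical to the paper's: the paper also assumes three smaller extensions $a_1<a_2<a_3$ and applies the bound $0.25a_1^2b^3<c<0.25a_2^2b^3$ (stated there as Theorem \ref{tm5.1}) to the pairs $(a_1,a_2)$ and $(a_2,a_3)$ to obtain the incompatible inequalities $c<0.25a_2^2b^3$ and $c>0.25a_2^2b^3$. The relabeling remark is harmless bookkeeping that the paper leaves implicit.
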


By combining Theorem \ref{glavni} with results from \cite{mbt}, \cite{duje_kon} and \cite{fil_xy4}, we will establish that there exist only finitely many $D(4)$-triples $\{b, c, d\}$ that can be extended to two distinct $D(4)$-quadruples with smaller elements $a_1 < a_2 < b$. Section 7 will be dedicated to proving this statement.

\begin{cor}\label{cor: konacno}
    There are only finitely many quintuples $\{a_1, a_2, b, c, d\}$ with
$a_1 < a_2 < b < c < d$ such that $\{a_1, b, c, d\}$ and $\{a_2, b, c, d\}$ are 
$D(4)$-quadruples.
\end{cor}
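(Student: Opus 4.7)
The plan is to combine Theorem \ref{glavni} with the existing partial verifications of Conjecture \ref{conj_2} and with effective upper bounds on $D(4)$-quadruples in order to confine every counterexample quintuple to a finite search region. Since Corollary \ref{cor: najvisedva} already bounds the number of small-element extensions of any fixed triple $\{b,c,d\}$ by two, it is enough to show that only finitely many triples $\{b,c,d\}$ occur.

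First I would use the comments on Conjecture \ref{conj_2} from the introduction: the region $c<0.25\,b^{3}$ is excluded by \cite{mbtap1}, and $c\geq 39247\,b^{4}$ is excluded by \cite[Theorem 1.6]{mbt}. So any counterexample satisfies $0.25\,b^{3}\leq c<39247\,b^{4}$. Combined with Theorem \ref{glavni} part~4, the inequality $0.25\,a_{1}^{2}b^{3}<c<39247\,b^{4}$ yields $a_{1}^{2}<156988\,b$, so $a_{1}=O(\sqrt{b})$, while parts 1--3 of Theorem \ref{glavni} then control $a_{2}$ in terms of $a_{1}$ and $b$ (in particular $b<a_{2}^{2}$ when $a_{1}\geq 2$, so $b$ and $a_{2}$ are of the same rough order).

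Second, to obtain an absolute bound on $b$, I would exploit that both $\{a_{1},b,c,d\}$ and $\{a_{2},b,c,d\}$ are $D(4)$-quadruples sharing the large triple $\{b,c,d\}$; by Corollary \ref{cor: nereg} both of them are, moreover, irregular. This yields two simultaneous systems of Pellian equations coming from $a_{i}b+4$, $a_{i}c+4$, $a_{i}d+4$ being squares for $i=1,2$, and the fundamental solutions of these systems are constrained by the strong inequalities of Theorem \ref{glavni}. Applying the hypergeometric method in the spirit of \cite{fil_xy4} together with Baker-type linear forms in logarithms in the spirit of \cite{duje_kon}, one should derive an effective inequality that forces $b$ below an absolute constant. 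Once $b$ is bounded, $c<39247\,b^{4}$ is bounded, the effective bounds for the largest element of a $D(4)$-quadruple containing $\{b,c\}$ bound $d$, and $a_{1},a_{2}<b$ are automatically bounded, so the set of quintuples is finite.

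The main obstacle is precisely this second step: turning the narrow range for $c$ given by Theorem \ref{glavni} part~4 and the doubled Pellian structure into a Baker-type inequality whose only solutions have $b$ small. The existing results bound $c$ only relatively to $b$, so the real work is to exploit the irregularity forced by Corollary \ref{cor: nereg} and the shared $\{b,c,d\}$ to close this gap, and this is where the machinery of \cite{fil_xy4} and \cite{duje_kon} enters in an essential way.
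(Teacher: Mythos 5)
Your proposal correctly identifies the shape of the problem --- everything reduces to producing an \emph{absolute} numerical bound, since the results proved so far only control $c$ relative to $b$ --- but the step that would actually produce that bound is exactly the step you leave open, describing it yourself as ``the main obstacle.'' As written, the argument is a plan rather than a proof: ``applying the hypergeometric method \ldots together with Baker-type linear forms \ldots one should derive an effective inequality that forces $b$ below an absolute constant'' is precisely the content that needs to be supplied, and nothing in Theorem \ref{glavni} or the doubled Pellian structure you invoke yields it automatically. Note also that the paper does not need to play the two systems for $a_1$ and $a_2$ against each other at this stage, nor does it bound $b$ first; the two quadruples are only used to guarantee, via Corollary \ref{cor: nereg} and Corollary \ref{cor: donja na c i d}, that one has a single irregular quadruple $\{a,b,c,d\}$ with $d>d_+$ and $c>0.25a_1^2b^3>b^{2.8}$.

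The missing ingredient is Lemma \ref{lema: nume_c}: for such an irregular quadruple, the congruence argument (in the style of \cite[Lemma 13]{fil_xy4} and \cite[Lemma 11]{duje_kon}) shows that the indices of the solution $z=v_m=w_n$ satisfy $n>c^{0.01}$, because if $n\leq c^{0.01}$ the relevant congruences modulo $c$ become equalities and force $m/n$ to be incompatible with the upper bound $m<1.36n+1$ coming from Lemma \ref{lemma: epsilon}. Feeding $c^{0.01}<n\leq m$ into the already-established Baker-type inequality $m/\log(38.92(m+1))<2.7717\cdot 10^{12}\log^2 c$ of \cite[Proposition 5.4]{mbt} gives $m<3.65\cdot 10^{21}$ and hence $c<m^{100}<10^{2157}$, an absolute bound on $c$ (and so on $b$ and on $a_1<a_2<b$). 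Finally, $d$ must also be bounded explicitly --- your appeal to generic ``effective bounds for the largest element'' should be replaced by the concrete estimate $d<v_m^2<\bigl(cx_{(0)}s_2^{m-1}\bigr)^2<10^{10^{26}}$ using the bound on $m$. With these two pieces in place your outline closes; without them the proof is incomplete. (Your first reduction via Corollary \ref{cor: najvisedva} is harmless but unnecessary once all five elements are absolutely bounded.)
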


From Corollary \ref{cor: nereg} we see that any counterexample to Conjecture \ref{conj_2} gives rise to two irregular $D(4)$-quadruples, which would be in contradiction to Conjecture \ref{conj_1}, which proves the following.
\begin{cor}\label{cor: conject}
Conjecture \ref{conj_1} implies Conjecture \ref{conj_2}.
\end{cor}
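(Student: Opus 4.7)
My plan is to argue by contrapositive: assume a counterexample to Conjecture \ref{conj_2} exists and produce from it a counterexample to Conjecture \ref{conj_1}. Concretely, suppose Conjecture \ref{conj_2} fails, so that there is a $D(4)$-quadruple $\{a_1,b,c,d\}$ with $a_1<b<c<d$, together with a positive integer $a_2 \neq a_1$, $a_2<b$, for which $\{a_2,b,c,d\}$ is also a $D(4)$-quadruple. After relabeling if necessary, I may assume $a_1<a_2<b<c<d$, which is exactly the hypothesis of Corollary \ref{cor: nereg}.

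Applying Corollary \ref{cor: nereg} then yields that $\{a_1,b,c,d\}$ is an irregular $D(4)$-quadruple, i.e.\ $d \neq d_+(a_1,b,c)$. On the other hand, by construction we have $d>c=\max\{a_1,b,c\}$, and the element $d_+(a_1,b,c)$ introduced in the Introduction always produces a $D(4)$-quadruple $\{a_1,b,c,d_+(a_1,b,c)\}$ with $d_+(a_1,b,c)>\max\{a_1,b,c\}$. Thus the triple $\{a_1,b,c\}$ admits two distinct extensions, $d$ and $d_+(a_1,b,c)$, each larger than $\max\{a_1,b,c\}$, which directly contradicts Conjecture \ref{conj_1}.

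Since all the nontrivial content has been packaged into Corollary \ref{cor: nereg}, there is no genuine obstacle in this final step; the only points to verify are that one may freely swap the labels of $a_1$ and $a_2$ so as to meet the ordering hypothesis $a_1<a_2$ of that corollary, and that the extension $d_+(a_1,b,c)$ is automatically distinct from $d$ precisely because irregularity gives $d \neq d_+(a_1,b,c)$.
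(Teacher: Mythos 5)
Your proposal is correct and is essentially the paper's own argument: a counterexample to Conjecture \ref{conj_2} yields, via Corollary \ref{cor: nereg}, an irregular quadruple $\{a_1,b,c,d\}$, so the triple $\{a_1,b,c\}$ admits the two distinct extensions $d$ and $d_+(a_1,b,c)$, contradicting Conjecture \ref{conj_1}. The relabeling step and the observation that irregularity is exactly what makes $d\neq d_+(a_1,b,c)$ are both handled correctly.
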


To demonstrate these results, we use mainly the approach and concepts from \cite{cdf2} to derive similar outcomes for extensions of a $D(4)$-triple. In Section \ref{sekcija_druga} we define the system of Pellian equations and introduce our main tools used in search of its solution.  Lemma \ref{lemma: gornja_n_1} gives us an upper bound for a solution of the system in terms of elements of the $D(4)$-triple.  
In Section \ref{sekcija_treca}, we clarify the circumstances under which the conditions of Lemma \ref{lemma: gornja_n_1} are met, reinforcing its applicability in the computer implementation of the algorithm crucial to finishing the majority of our proofs. 
More precisely, by considering the results from Lemma \ref{lemma: gornja_n_1}, along with Lemmas \ref{gornje_ograde3} and \ref{lema: donja na n u terminima b i c}, we are left with only a finite number of $D(4)$-triples to study. The remaining portion of the paper is dedicated to the proof of each statement of Theorem \ref{glavni} and its consequences. The proofs for Corollaries \ref{cor: nereg} and \ref{cor: najvisedva} are covered in Section 4. However, the proof for Corollary \ref{cor: konacno} is presented separately in the final section. This separation is necessary as it relies on an auxiliary result that was not previously established for $D(4)$-quadruples.

These results improve \cite{mbtap1} significantly by giving new relations between elements of $D(4)$-quadruples $\{a_1,b,c,d\}$ and $\{a_2,b,c,d\}$.

\section{Pellian equations and bounds for indices of solutions}\label{sekcija_druga}
Assume that $ \{a_1,b,c,d\} $ and $ \{a_2,b,c,d\} $ are $D(4)$-quadruples with $a_1<a_2<b<c<d$. Let $r_1,r_2,s_1,s_2,t,x_1,x_2,y,z$ be positive integers satisfying 
$$
\begin{array}{lll}
    a_1b+4=r_1^2,  &\quad a_1c+4=s_1^2, & \quad a_1d+4=x_1^2,\\
    a_2b+4=r_2^2, &\quad a_2c+4=s_2^2, & \quad a_2d+4=x_2^2, \\
    bc+4=t^2, &\quad bd+4=y^2, & \quad cd+4=z^2.
\end{array}
$$
Considering $x_1,x_2,y,z$ as unknowns, we obtain the following system of Pellian equations:
\begin{align}
    a_1z^2-cx_1^2&=4(a_1-c) \label{prva} \\a_2z^2-cx_2^2&=4(a_2-c) \label{druga} \\
    bz^2-cy^2&=4(b-c) \label{treca}
\end{align}
As described in \cite{fil_xy4}, any positive integer solution to (\ref{druga}) and (\ref{treca}) is given by $z=v_m=w_n$ for some non-negative integers $m,n$, where $\{v_m\}$ and $\{w_n\}$ are recurrent sequences 
\begin{align}
    v_0&=z_{(0)}, \ v_1=\frac{1}{2}(s_2z_{(0)} + cx_{(0)}), \label{niz vm} \ v_{m+2}=s_2v_{m+1}-v_m, \\
    w_0&=z_{(1)}, \ w_1=\frac{1}{2}(tz_{(1)} + cy_{(1)}), \ w_{n+2}=tw_{n+1}-w_n.\label{niz wm}
\end{align}
Moreover, Theorem 1.3 and Lemma 2.14 in \cite{mbt} imply that a solution $z=v_m=w_n$ also satisfies either
\begin{equation}
     m \equiv n \equiv 0 \ (\bmod\ 2) \ \text{and}\ x_{(0)} = y_{(1)} = |z_{(0)}| = |z_{(1)}|=2 \ \text{with} \ z_{(0)}z_{(1)}>0; \label{rj_1} 
\end{equation}
or
\begin{equation}
     m \equiv n \equiv 1 \ (\bmod\ 2) \ \text{and}\ x_{(0)} = y_{(1)} = r_2, \ |z_{(0)}| = t, \ |z_{(1)}|=s_2 \ \text{with} \ z_{(0)}z_{(1)}>0.\label{rj_2}
\end{equation}

Since at least one of $D(4)$-quadruples is irregular, we can use the following lemma for a lower bound for the element $b$.

\begin{lemma}[{\cite[Lemma 2.2]{mbt}}]
    Let $\{a,b,c,d\}$ be an irregular $D(4)$-quadruple with $a<b<c<d$. Then $b>10^5$.
\end{lemma}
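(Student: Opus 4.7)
The plan is to imitate the standard ``irregular quadruple'' arguments already developed for $D(1)$- and $D(4)$-tuples and then finish with a computer search. Attach to the irregular extension $d$ the Pellian equations
$$a d + 4 = x^{2}, \qquad b d + 4 = y^{2}, \qquad c d + 4 = z^{2},$$
together with $ab+4 = r^2$, $ac+4 = s^2$, $bc+4 = t^2$. As recalled in Section \ref{sekcija_druga}, any solution $z$ arises as $z = v_m = w_n$ in the two binary recurrences built from $\{a,c\}$ and $\{b,c\}$, and the fundamental pair $(z_{(0)},z_{(1)})$ falls into one of the two cases analogous to \eqref{rj_1}--\eqref{rj_2}. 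The regular extension $d_+(a,b,c)$ corresponds to the smallest index pair; the irregularity hypothesis $d \neq d_+(a,b,c)$ forces $m \geq 3$ and $n \geq 3$.

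Next I would extract a lower bound for $d$ from the recurrences: expanding $w_n$ shows $d \geq C\, b\, c\, t^{2(n-1)}$ up to lower-order terms, so already at $n = 3$ we get $d \gg b^{2} c^{3}$. In the other direction, I would apply a linear-forms-in-logarithms estimate (Matveev's theorem, exactly in the shape used in \cite{mbt} and \cite{fil_xy4}) to the quantity $\Lambda = m\log \alpha - n\log\beta + \log\gamma$, where $\alpha$ and $\beta$ are the dominant roots of the two Pell recurrences and $\gamma$ is an algebraic number built from $s,t,r,a,b,c$. Combined with the elementary upper bound $|\Lambda| \leq C' \cdot \alpha^{-2m}$ coming from comparing $v_m$ and $w_n$ modulo $c$, this yields an effective upper bound on $n$ of the shape $n \leq C''(\log b)(\log\log b)$ valid once $b$ exceeds a small explicit threshold.

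Combining the exponential lower bound for $d$ with a matching Rickert-type (hypergeometric) upper bound, or with the Bennett--Rickert gap estimates as adapted in \cite{mbt}, one obtains an inequality that is incompatible with $b$ small; concretely, standard arguments already rule out all $b$ above some moderate cut-off (in the $D(4)$ setting, something like $b \leq 10^{5}$ is what survives). The residual finite range is then handled by a Baker--Davenport reduction followed by a direct computer sweep: enumerate all $D(4)$-pairs $\{a,b\}$ with $b \leq 10^{5}$, for each such pair run through the finitely many $c$ that can appear together with $a,b$ in a triple satisfying the gap constraints already obtained, and for each triple verify that the only $d > c$ solving the Pellian system is $d_+(a,b,c)$.

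The main obstacle is the interplay between the two index-parity cases \eqref{rj_1} and \eqref{rj_2}, which have to be handled separately because the fundamental solutions $(x_{(0)},z_{(0)})$ and $(y_{(1)},z_{(1)})$ differ, and because in case \eqref{rj_2} the odd-index congruences produce slightly weaker effective bounds; consequently the computer step must loop over both cases. Once this is accounted for, no irregular $D(4)$-quadruple with $b \leq 10^{5}$ survives, giving $b > 10^{5}$ as claimed.
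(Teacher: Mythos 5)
This lemma is not proved in the paper at all: it is imported verbatim as Lemma 2.2 of \cite{mbt}, so there is no internal argument to compare your sketch against. Judged on its own terms, your proposal correctly names the standard toolkit (the Pellian system, the recurrences $v_m=w_n$, gap principles, linear forms in logarithms, hypergeometric estimates, Baker--Davenport reduction plus a computer sweep), which is indeed the machinery used in \cite{mbt} and its predecessors. But as written it is a plan rather than a proof, and it has two concrete gaps.

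First, the finiteness of the search space is asserted rather than established. For a fixed $D(4)$-pair $\{a,b\}$ with $b\le 10^5$ there are infinitely many $c$ extending it to a triple (they lie in the solution sequences of a Pellian equation, exactly as described at the end of Section \ref{sekcija_druga}), so the step ``run through the finitely many $c$'' is precisely where the work lies: one must first show that for $c$ large the upper bound on $n$ coming from the hypergeometric or Baker-type estimates contradicts a lower bound on $n$ that grows with $c$, and only then is a finite list of $c$ left over for the reduction and the sweep. Second, the logical direction of your analytic step is muddled: you state both that the final inequality is ``incompatible with $b$ small'' and that it ``rules out all $b$ above some moderate cut-off.'' The latter cannot be what happens --- eliminating all irregular quadruples with large $b$, combined with a computer check for small $b$, would prove Conjecture \ref{conj_1} outright. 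The actual target is only to eliminate irregular quadruples with $b\le 10^5$, and every estimate has to be organized around that fixed finite range of $b$ (with $c$ unbounded), not around bounding $b$ from above. Since the statement is a quoted external result, the clean resolution here is simply to cite \cite{mbt}; a self-contained proof would have to fill in the two points above.
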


Also, note that it holds $a_2-a_1 \geq 2$ since  \cite[Theorem 1.1]{mbtap1} assures that $a_2 \neq a_1 + 1$ for all $a_1 \neq 3$. If $a_1 = 3$ and $a_2=a_1+1=4$, then $\{a_1,a_2,b,c,d\}$ would be a $D(4)$-quintuple, which is a contradiction to \cite[Theorem 1]{nas2}. Hence, as $a_1 \geq 1$, we have $a_2 \geq 3$.

\smallskip

We will now recall some useful tools, first introduced in \cite{rickert}, and establish some improvements on bounds for indices $m$ and $n$ in terms of elements of an irregular $D(4)$-quadruple.

\begin{theorem}\label{tm: gornje_ograde}
Let $a_1,a_2$ be integers such that $0<a_1 \leq a_2-2,\ a_2 \geq 3$ and let's denote $a_1' := \max\{4(a_2-a_1),4a_1\}$. Also, let $N$ be a multiple of $a_1a_2$ and assume that $N \geq 396.59a_1'a_2^2(a_2-a_1)^2$. Then the numbers $\theta_1 = \sqrt{1+\frac{4a_2}{N}}$ and $\theta_2 = \sqrt{1+\frac{4a_1}{N}}$ satisfy
$$ \left\{ \left| \theta_1-\frac{p_1}{q} \right| , \left|\theta_1-\frac{p_2}{q} \right|     \right\} > \frac{a_1}{3.53081 \cdot 10^{27} a_1'a_2N}q^{-\lambda}$$
for all integers $p_1,p_2,q$, with $q>0$, where 
\[\lambda=1+\frac{\log(2.500788a_1^{-1}a_1'a_2N)}{\log(0.04216a_1^{-1}a_2^{-1}(a_2-a_1)^{-2}N^2)}<2.\]
\end{theorem}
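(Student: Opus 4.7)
The plan is to apply the hypergeometric (Padé / Rickert-type) method for simultaneous rational approximations. Set $\alpha_i = 4a_i/N$, so that $\theta_i = (1+\alpha_i)^{1/2}$. For each $k \geq 0$, I would construct integers $p_k^{(1)}, p_k^{(2)}, q_k$ via Rickert's contour-integral construction (as in \cite{rickert}) applied to the functions $(1+\alpha_i z)^{1/2}$ at $z=1$, yielding simultaneous Padé-type linear forms
$$
E_k^{(i)} := q_k\,\theta_i - p_k^{(i)}
$$
that vanish to high order in the $\alpha_i$. The fact that $N$ is a multiple of $a_1 a_2$ is what makes the $\alpha_i$ rational with controlled denominators so that $p_k^{(i)}$ and $q_k$ can be taken to be integers after clearing denominators.

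The second step is to establish two size estimates: an upper bound $|q_k| \leq C_Q \cdot Q^k$ and $|E_k^{(i)}| \leq C_E \cdot L^{-k}$, with $Q$ and $L$ explicit in $a_1,a_2,N$. Heuristically $Q$ scales like $N/(a_2-a_1)^2$ while $L$ scales like $N^2/(a_1 a_2(a_2-a_1)^2)$, so $L/Q$ grows linearly in $N$. The hypothesis $N \geq 396.59\, a_1' a_2^2 (a_2-a_1)^2$ is calibrated precisely so that $L > Q$, which is the condition needed for the method to yield a nontrivial irrationality measure $\lambda<2$; the claimed formula for $\lambda$ appears as $1 + \log(\text{const}\cdot Q)/\log(\text{const}\cdot L/Q)$, matching exactly the expression in the statement.

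The third step is the standard pigeonhole/Liouville argument. Given putative common-denominator approximations $p_1/q, p_2/q$ to $\theta_1,\theta_2$, choose $k$ minimal with $C_Q Q^{k+1}$ exceeding $q$ by a controlled factor. Rickert's nonvanishing of the $2\times 2$ determinant $p_k^{(i)}q_{k+1} - p_{k+1}^{(i)}q_k$ guarantees that for at least one $j\in\{k,k+1\}$ and $i\in\{1,2\}$, the integer $p_j^{(i)}q - p_i q_j$ is nonzero, hence $\geq 1$ in absolute value. Expanding
$$
p_j^{(i)} q - p_i q_j = q\,E_j^{(i)} - q_j(q\theta_i - p_i)
$$
and substituting $|E_j^{(i)}| \leq C_E L^{-j}$ together with $|q_j| \leq C_Q Q^j$, one solves for $|\theta_i - p_i/q|$ and extracts the claimed lower bound with the explicit constant $a_1/(3.53081 \cdot 10^{27}\, a_1' a_2 N)$.

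The principal technical obstacle, as is typical in this method, is the sharp \emph{denominator estimate} for the binomial coefficients $\binom{k+1/2}{j}$: one needs a bound of the form $D_k \leq c\cdot 4^k$ with explicit correction terms over the primes $p \leq 2k+1$, coming from Chebyshev- and Mertens-type estimates. Tracking this denominator precisely, together with the role of $a_1' = \max\{4(a_2-a_1), 4a_1\}$ in bounding the geometric factor from the distance between $\alpha_1$ and $\alpha_2$ (and from each $\alpha_i$ to $0$), is what produces both the numerical threshold $396.59$ in the hypothesis and the large constant $3.53081 \cdot 10^{27}$ in the conclusion. All other steps are straightforward once these constants are in place, so the proof largely reduces to adapting Rickert's original construction with careful bookkeeping of the constants tailored to the $D(4)$-quadruple parameters.
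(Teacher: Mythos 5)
Your proposal is correct and follows essentially the same route as the paper, which gives no details of its own: the stated result is obtained verbatim from the proof of Theorem~2 in \cite{nas2} (itself a Rickert-type simultaneous-approximation argument) with the parameter $g$ there replaced by $1$, and that argument consists exactly of the Pad\'e construction, the $q_k$/error-term size estimates, the nonvanishing determinant, and the gap argument you outline. The only substantive content not present in your sketch is the explicit bookkeeping producing the constants $396.59$ and $3.53081\cdot 10^{27}$, which you correctly identify as the technical crux and which is carried out in the cited reference.
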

\begin{proof}
The validity of this assertion is obvious from the proof of \cite[Theorem 2]{nas2} with $g$ replaced by $1$.
\end{proof}
The proof of the following result can be derived  as in \cite[Lemma 6]{dujram}.
\begin{lemma}\label{gornje_ograde2}
Let $N=a_1a_2c$ and let $\theta_1$, $\theta_2$ be as in Theorem \ref{tm: gornje_ograde}. Then, all positive solutions to the system of Pellian equations (\ref{prva}) and (\ref{druga}) satisfy
$$\max\left\{\left|\theta_1-\frac{s_1a_2x_1}{a_1a_2z}\right|,\left|\theta_2-\frac{s_2a_1x_2}{a_1a_2z}\right|\right\}<\frac{2c}{a_1}z^{-2}.$$
\end{lemma}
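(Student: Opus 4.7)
The plan is to adapt the approach of Lemma~6 in \cite{dujram} to the $D(4)$ setting. First, with $N = a_1 a_2 c$ the numbers $\theta_1$ and $\theta_2$ simplify considerably: since $a_i c + 4 = s_i^2$ by definition, one has $\theta_1 = \sqrt{1 + 4/(a_1 c)} = s_1/\sqrt{a_1 c}$ and, symmetrically, $\theta_2 = s_2/\sqrt{a_2 c}$. The natural rational approximations are therefore $\frac{s_i x_i}{a_i z} = \frac{s_i a_{3-i} x_i}{a_1 a_2 z}$, precisely the fractions appearing in the stated bound.

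Next, for each $i \in \{1,2\}$ I would apply the classical difference-of-squares trick. Direct computation, combined with the Pellian equation $a_i z^2 - c x_i^2 = 4(a_i - c)$, yields
\[
\theta_i^2 - \left(\frac{s_i x_i}{a_i z}\right)^2 = \frac{s_i^2(a_i z^2 - c x_i^2)}{a_i^2 c z^2} = -\frac{4 s_i^2(c - a_i)}{a_i^2 c z^2}.
\]
Since $c > a_i$, this forces $\frac{s_i x_i}{a_i z} > \theta_i$; dividing through by the sum $\theta_i + \frac{s_i x_i}{a_i z}$ and bounding this below by $2\theta_i = 2 s_i/\sqrt{a_i c}$ gives the preliminary estimate
\[
\left|\theta_i - \frac{s_i x_i}{a_i z}\right| < \frac{2 s_i(c - a_i)}{a_i^{3/2}\sqrt{c}\, z^2}.
\]

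To finish, I would collapse the right-hand side to the uniform form $\frac{2c}{a_1 z^2}$ stated in the lemma. Substituting $s_i = \sqrt{a_i c + 4}$, the coefficient on $z^{-2}$ equals $\frac{2(c-a_i)}{a_i}\sqrt{1 + 4/(a_i c)}$, which is bounded above by $\frac{2c}{a_i}$ up to a correction factor $\sqrt{1 + 4/(a_i c)}$. For $i = 2$, using $a_1 \le a_2$ then passes to $\frac{2c}{a_1}$, while for $i = 1$ it is already the stated bound modulo this correction. The main technical subtlety will be absorbing the correction factor uniformly; this is straightforward because the preceding lemma guarantees $c > b > 10^5$ whenever at least one of the quadruples is irregular, so $\sqrt{1 + 4/(a_i c)}$ is vanishingly close to $1$ and is easily absorbed by the slack between $c - a_i$ and $c$.
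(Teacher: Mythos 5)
Your overall route is the intended one --- it is precisely the difference-of-squares computation behind Lemma~6 of \cite{dujram}, which is all the paper itself offers as a proof --- and everything through your ``preliminary estimate'' is correct: with $N=a_1a_2c$ one indeed has $\theta_1=s_1/\sqrt{a_1c}$ and $\theta_2=s_2/\sqrt{a_2c}$, the fractions in the statement reduce to $s_ix_i/(a_iz)$, and the Pellian equations (\ref{prva}), (\ref{druga}) give
\[
\left|\theta_i-\frac{s_ix_i}{a_iz}\right|<\frac{2s_i(c-a_i)}{a_i^{3/2}c^{1/2}z^2}=\frac{2(c-a_i)}{a_iz^2}\sqrt{1+\frac{4}{a_ic}}.
\]

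The gap is in your last step. Absorbing the factor $\sqrt{1+4/(a_ic)}$ into the slack between $c-a_i$ and $c$ requires $(c-a_i)^2\bigl(1+\tfrac{4}{a_ic}\bigr)\le c^2$, which after clearing denominators reads $(2a_i^2-4)c^2+(8a_i-a_i^3)c-4a_i^2\ge 0$. This holds for $a_i\ge 2$ (positive leading coefficient and $c$ enormous), so your argument is complete for the $i=2$ term (where $a_2\ge 3$) and for the $i=1$ term whenever $a_1\ge 2$. But for $a_1=1$ it becomes $-2c^2+7c-4\ge 0$, false for all $c\ge 3$: the relative slack between $c-1$ and $c$ is only $1/c$, while the correction factor is about $1+2/c$, so it is \emph{not} absorbed. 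Nor can the estimate be rescued by a sharper lower bound on the denominator: $\theta_1+\frac{s_1x_1}{a_1z}$ exceeds $2\theta_1$ only by $O(cz^{-2})=O(c^{-3})$, so for $a_1=1$ the quantity $|\theta_1-\tfrac{s_1x_1}{z}|$ is genuinely of size about $2(c+1)z^{-2}$, which marginally exceeds the claimed $2cz^{-2}$. The fix is to either restrict the clean constant to $a_1\ge 2$ or state the bound with a slightly larger constant such as $\tfrac{2(c+1)}{a_1}z^{-2}$ (or $\tfrac{4c}{a_1}z^{-2}$); this is harmless for the only application, Lemma~\ref{lemma: gornja_n_1}, which uses the estimate only up to large multiplicative constants. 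As written, though, your final sentence asserts that the absorption is ``straightforward'' in a case ($a_1=1$) where it in fact fails.
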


The next result is proven similarly as \cite[Lemma 6]{nas2} or \cite[Lemma 8]{sestorka}.
\begin{lemma}\label{gornje_ograde3}
     Assume that $c >4.1\cdot10^{-5}a_2b^3$ and $b>10^5$. If $z=w_n$ with $n \geq 4$, then 
    \[ \log z > \frac{n}{2}\log(bc) .\]
\end{lemma}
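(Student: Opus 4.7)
The plan is to derive the bound from the Binet closed form of $w_n$, showing that the dominant term already comfortably exceeds $(bc)^{n/2}$ once the lower-order remainder is absorbed by the growth factor arising from $n\ge 4$. Let
\[
\alpha=\tfrac12(t+\sqrt{t^2-4}),\qquad \beta=\tfrac12(t-\sqrt{t^2-4}),
\]
so $\alpha\beta=1$, $\alpha+\beta=t$, $\alpha-\beta=\sqrt{bc}$. Set $V_n=(\alpha^n-\beta^n)/(\alpha-\beta)$, satisfying $V_0=0$, $V_1=1$, $V_{n+2}=tV_{n+1}-V_n$. The recurrence (\ref{niz wm}) then gives $w_n=V_nw_1-V_{n-1}w_0$ for $n\ge 1$. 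Two identities will be used throughout: $tV_n=V_{n+1}+V_{n-1}$ and $V_{n+1}-V_{n-1}=\alpha^n+\beta^n$. A short squaring argument also yields $\alpha^2\ge bc$ (it reduces to $t\sqrt{t^2-4}\ge t^2-6$, which holds whenever $t^2\ge 4$), so $\alpha^n\ge (bc)^{n/2}$.

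I would then split into the four sub-cases of (\ref{rj_1}) and (\ref{rj_2}), parametrising by $\varepsilon=\pm 1$, the sign of $z_{(1)}$. In case (\ref{rj_1}), with $w_0=2\varepsilon$ and $w_1=c+\varepsilon t$, the closed form collapses to
\[
w_n = cV_n + \varepsilon(\alpha^n+\beta^n),
\]
and in case (\ref{rj_2}), with $w_0=\varepsilon s_2$ and $w_1=(cr_2+\varepsilon t s_2)/2$, to
\[
w_n = \tfrac12\bigl(cr_2\,V_n + \varepsilon s_2(\alpha^n+\beta^n)\bigr).
\]
For $\varepsilon=+1$ every term is positive, so the crude estimates $w_n>cV_n\ge c(bc)^{(n-1)/2}$, respectively $w_n>cr_2V_n/2\ge cr_2(bc)^{(n-1)/2}/2$, combined with $c>\sqrt{bc}$ (since $c>b$) and $cr_2>2\sqrt{bc}$ (since $r_2\ge 2$ and $c>b$), force $w_n>(bc)^{n/2}$ already.

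The delicate case is $\varepsilon=-1$, where the two Binet terms partially cancel. Grouping by $\alpha^n$ and $\beta^n$, the coefficient of $\alpha^n$ in $w_n$ is proportional to $c-\sqrt{bc}$ in case (\ref{rj_1}) and to $cr_2-s_2\sqrt{bc}$ in case (\ref{rj_2}); positivity of the former uses $c>b$, and of the latter follows from the identity
\[
(cr_2)^2-s_2^2(\alpha-\beta)^2=c^2(a_2b+4)-(a_2c+4)bc=4c(c-b)>0.
\]
Quantitatively, the stronger inequality $cr_2>(s_2+2)\sqrt{bc}$ reduces to roughly $c>a_2b^2$, and the hypothesis $c>4.1\cdot 10^{-5}a_2 b^3$ together with $b>10^5$ and $a_2\ge 3$ gives $c/(a_2b^2)>4.1\cdot 10^{-5}b>4$, providing the required positive margin. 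The assumption $n\ge 4$ enters at the final step to guarantee that the growing factor $(bc)^{n/2}$ dominates the bounded lower-order $\beta^n$-contribution, yielding $w_n>(bc)^{n/2}$. This last sub-case ($\varepsilon=-1$ in (\ref{rj_2})) is the main obstacle: the near-cancellation of the two Binet coefficients forces a careful quantitative conversion of the hypothesis on $c$ into a margin large enough that $n=4$ already suffices.
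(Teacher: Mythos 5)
Your plan is correct: the case analysis over (\ref{rj_1}) and (\ref{rj_2}) with $\varepsilon=\pm1$ is exhaustive, the Binet-form identities check out (in particular $(cr_2)^2-s_2^2bc=4c(c-b)$), and the hypothesis $c>4.1\cdot10^{-5}a_2b^3$ together with $b>10^5$ indeed yields $c>4.1\,a_2b^2$, which is exactly the margin needed in the delicate sub-case $\varepsilon=-1$ of (\ref{rj_2}). The paper gives no proof of this lemma, only a pointer to \cite[Lemma 6]{nas2} and \cite[Lemma 8]{sestorka}, and those arguments bound the growth of the same recurrences from the same fundamental solutions, so your route is essentially the standard one.
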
  
Combining these results yields a very useful lemma.
\begin{lemma}\label{lemma: gornja_n_1}
    Let $\{a_1,b,c,d\}$ and $\{a_2,b,c,d\}$ be $D(4)$-quadruples such that the assumptions in Theorem \ref{tm: gornje_ograde} and Lemma \ref{gornje_ograde3} hold. If $z=v_m=w_n$ has a solution for some integers $m$ and $n$ with $n \geq 4$, then 
   $$ n < \frac{8\log(8.4034 \cdot 10^{13} a_1^{1/2}(a_1')^{1/2}a_2^2c)\log(0.20533a_1^{1/2}a_2^{1/2}(a_2-a_1)^{-1}c)}{\log(bc)\log(0.01685a_1(a_1')^{-1}a_2^{-1}(a_2-a_1)^{-2}c)}. $$
\end{lemma}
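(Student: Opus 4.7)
The plan is to sandwich the quantity $\max\{|\theta_1-p_1/q|,\,|\theta_2-p_2/q|\}$ between the lower bound coming from Theorem~\ref{tm: gornje_ograde} and the upper bound from Lemma~\ref{gornje_ograde2}, rearrange this into an upper bound on $\log z$, and then convert it into a bound on $n$ via Lemma~\ref{gornje_ograde3}.

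Concretely, I would set $N=a_1a_2c$, $q=a_1a_2z$, $p_1=s_1a_2x_1$ and $p_2=s_2a_1x_2$. Since the hypotheses of Theorem~\ref{tm: gornje_ograde} hold with this $N$ by assumption,
$$\max\left\{\left|\theta_1-\frac{p_1}{q}\right|,\left|\theta_2-\frac{p_2}{q}\right|\right\}>\frac{a_1}{3.53081\cdot10^{27}\,a_1'a_2N}\,(a_1a_2z)^{-\lambda},$$
while Lemma~\ref{gornje_ograde2} bounds the same maximum above by $(2c/a_1)z^{-2}$. Chaining these two estimates and substituting $N=a_1a_2c$ gives
$$z^{2-\lambda}<\frac{7.06162\cdot10^{27}\,a_1'a_2^2c^2}{a_1}(a_1a_2)^{\lambda}.$$
Because Theorem~\ref{tm: gornje_ograde} also asserts $\lambda<2$ and since $a_1a_2\geq 1$, the coarsening $(a_1a_2)^{\lambda}\leq(a_1a_2)^{2}$ rewrites the right-hand side as the perfect square $\bigl(8.4034\cdot10^{13}\,a_1^{1/2}(a_1')^{1/2}a_2^2c\bigr)^{2}$.

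Next, I would substitute $N=a_1a_2c$ into the definition of $\lambda$ and set
$$A=\log(2.500788\,a_1'a_2^2c),\qquad B=\log\bigl(0.04216\,a_1a_2(a_2-a_1)^{-2}c^2\bigr),$$
so that $\lambda=1+A/B$ and $2-\lambda=(B-A)/B$. A direct computation gives $B=2\log(0.20533\,a_1^{1/2}a_2^{1/2}(a_2-a_1)^{-1}c)$ together with $B-A=\log(0.01685\,a_1(a_1')^{-1}a_2^{-1}(a_2-a_1)^{-2}c)$. Taking logarithms in the previous display then yields
$$\log z<\frac{2B}{B-A}\log\bigl(8.4034\cdot10^{13}\,a_1^{1/2}(a_1')^{1/2}a_2^2c\bigr).$$

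Finally, since $n\geq 4$ and the hypotheses of Lemma~\ref{gornje_ograde3} are in force, one has $(n/2)\log(bc)<\log z$. Plugging this into the last display and solving for $n$ gives precisely the claimed bound. The argument is essentially routine once the two analytic inequalities are matched; the only calculation worth noting is the replacement $(a_1a_2)^{\lambda}\leq(a_1a_2)^{2}$, which is what creates the square-root factors $a_1^{1/2}(a_1')^{1/2}$ inside the numerator of the final bound and makes the expression align with the half-logarithm forms of $B$ and $B-A$.
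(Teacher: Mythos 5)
Your proposal is correct and follows essentially the same route as the paper: the same choices $q=a_1a_2z$, $p_1=s_1a_2x_1$, $p_2=s_2a_1x_2$, $N=a_1a_2c$, the same chaining of Theorem \ref{tm: gornje_ograde} with Lemma \ref{gornje_ograde2} via $(a_1a_2)^{\lambda}\leq(a_1a_2)^2$, and the same conversion through Lemma \ref{gornje_ograde3}. The only nitpick is that your identities $B=2\log(0.20533\cdots)$ and $B-A=\log(0.01685\cdots)$ are really the inequalities $B<2\log(0.20533\cdots)$ and $B-A>\log(0.01685\cdots)$ (the constants $0.20533^2\approx 0.04216$ and $0.04216/2.500788\approx 0.01686$ are rounded), but both roundings go in the direction that preserves the final bound.
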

\begin{proof}
    We apply Theorem \ref{tm: gornje_ograde} with
    \[q=a_1a_2z,\:\: p_1=s_1a_2x_1,\:\: p_2=s_2a_1x_2,\:\: N=a_1a_2c.\]
    Combining Theorem \ref{tm: gornje_ograde} with Lemma \ref{gornje_ograde2}, we get
    \[ z^{2-\lambda} < 7.06162 \cdot 10^{27} a_1a_1'a_2^4c^2 < (8.4034 \cdot 10^{13} a_1^{1/2}(a_1')^{1/2}a_2^2c)^2  \]
    and
    $$\frac{1}{2-\lambda} < \frac{2\log(0.20533a_1^{1/2}a_2^{1/2}(a_2-a_1)^{-1}c)}{\log(0.01685a_1(a_1')^{-1}a_2^{-1}(a_2-a_1)^{-2}c)}.$$
    Our claim now results from the comparison of the above inequalities with those in Lemma \ref{gornje_ograde3}.
\end{proof}

From \cite[Lemma 2.6]{mbt} we know that a regular quadruple is obtained as a solution of $z=v_m=w_n$ for $m,n\leq 2$. Moreover, \cite[Lemma 2.10]{mbt} together with \cite[Theorem 1.3]{mbt} gives us the next lower bound for $n$ and $m$.
\begin{lemma}\label{lema: n vece 7}
If $v_m=w_n$ has a solution for $m>2$, then $6\leq m \leq  2n + 1$ and $n \geq 7$.
\end{lemma}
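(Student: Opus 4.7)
The plan is to split the lemma into the upper bound $m \leq 2n+1$ and the two lower bounds $m \geq 6$ and $n \geq 7$, treating growth-rate comparison and congruence analysis separately.

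For the upper bound, I would pass to closed forms for the recurrences. Writing $\alpha = (s_2+\sqrt{s_2^2-4})/2$ and $\gamma = (t+\sqrt{t^2-4})/2$ for the dominant characteristic roots of (\ref{niz vm}) and (\ref{niz wm}), one has $v_m$ of order $\alpha^m$ with $\alpha$ comparable to $s_2 = \sqrt{a_2c+4}$, and $w_n$ of order $\gamma^n$ with $\gamma$ comparable to $t = \sqrt{bc+4}$. The identity $v_m = w_n$ then forces $m \log s_2 = n \log t + O(1)$. Since $a_2 < b$ and $c > b > 10^5$, a direct estimate yields $t < s_2^2$, hence $\log t / \log s_2 < 2$, and absorbing the bounded error gives $m \leq 2n+1$.

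For the lower bounds, the approach is a finite case-by-case exclusion. Expanding the recurrences (\ref{niz vm}) and (\ref{niz wm}) modulo $c^2$ yields explicit polynomial expressions for $v_m$ and $w_n$ in $r_2, s_2, t$ and the initial data $x_{(0)}, y_{(1)}, z_{(0)}, z_{(1)}$; the parity constraint from (\ref{rj_1})--(\ref{rj_2}) pairs $m$ and $n$ of the same parity. For each admissible pair $(m,n)$ with $m \in \{3,4,5\}$ or $n \in \{3,4,5,6\}$, subject to this parity coupling and to $m \leq 2n+1$, the congruence $v_m \equiv w_n \pmod{c^2}$ becomes an algebraic relation in the parameters; combining it with the size bounds $r_2 = O(b)$ and $s_2, t = O(\sqrt{bc})$ together with $c$ large lifts this congruence to an exact equality. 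Squaring away radicals yields a polynomial identity in $a_2, b, c$ which is inconsistent with $\{a_2, b, c\}$ being a $D(4)$-triple satisfying $a_2 \geq 3$ and $b > 10^5$.

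The main obstacle is the organization of the case analysis. The two parity classes (\ref{rj_1}) and (\ref{rj_2}) give genuinely different initial-data profiles, and within each class the number of admissible $(m,n)$ pairs to exclude, while finite, is non-trivial. The boundary cases where $m$ is close to $2n+1$ are the most delicate: the leading coefficients in the congruence relation become small there, and one must either push the expansion to a higher power of $c$ or invoke a sharper size estimate on $r_2, s_2, t$ to finish the exclusion. This refined bookkeeping is precisely the content of \cite[Theorem 1.3]{mbt} and \cite[Lemma 2.10]{mbt}, whose combination yields the asserted bounds.
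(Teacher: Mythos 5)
The paper itself offers no proof of this lemma: it is imported verbatim as a consequence of \cite[Lemma 2.10]{mbt} and \cite[Theorem 1.3]{mbt}, and your proposal ultimately does the same, since your closing paragraph delegates ``the refined bookkeeping'' to exactly those two results. So as a citation-with-gloss your answer lands in the same place as the paper. Your sketch of what lies behind the citation is directionally the standard one (growth-rate comparison for $m\leq 2n+1$, congruences modulo $c$ or $c^2$ plus the classification (\ref{rj_1})--(\ref{rj_2}) of fundamental solutions for the lower bounds), but two points in it would not survive being written out as stated.

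First, in the upper-bound argument the error term in $m\log s_2 = n\log t + O(1)$ is not an absolute constant: the coefficients in the closed forms involve the initial data, and in the odd case $|z_{(0)}|=t\approx\sqrt{bc}$ and $|z_{(1)}|=s_2\approx\sqrt{a_2c}$, which contribute roughly half a power of the characteristic root and hence shift the index count by about one unit. The precise constant $+1$ in $m\leq 2n+1$ comes from tracking these specific initial values from (\ref{rj_1})--(\ref{rj_2}); without that, your estimate only gives $m\leq 2n+O(1)$ with an unspecified constant. Second, the step ``$c$ large lifts this congruence to an exact equality'' requires that quantities such as $tn\approx n\sqrt{bc}$ and $bn(n+1)$ be dominated by $c$ (or $c^2$), which is not automatic from the hypotheses of the lemma alone; it relies on gap conditions between $b$ and $c$ (and on $b>10^5$ for irregular quadruples) that are established in \cite{mbt} before its Lemma 2.10 is proved. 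Finally, the exclusion of each small-index pair --- the actual content of $m\geq 6$ and $n\geq 7$ --- is only asserted in your write-up (``yields a polynomial identity \dots inconsistent''), and that is precisely where all the work resides. None of this makes your outline wrong, but as an independent proof it is incomplete at exactly the points where the cited results do the heavy lifting.
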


\begin{lemma}[{\cite[Lemma 2.9]{mbt}}]\label{lemma: epsilon}
    Assume that $z=v_m=w_n$ has a solution for some integers $m$ and $n$. If $c>b^{\varepsilon}$, $b>10^5$, $1 \leq \varepsilon < 12$, then 
    $$ m < \frac{\varepsilon+1}{0.999\varepsilon}n + 1.5 - 0.4\frac{\varepsilon+1}{0.999\epsilon}.$$
\end{lemma}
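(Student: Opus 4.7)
The plan is to compare the asymptotic growth rates of $\{v_m\}$ and $\{w_n\}$. Both sequences satisfy binary linear recurrences of Pell type, so they admit Binet-like closed forms. Setting $\alpha = (s_2 + \sqrt{s_2^2-4})/2$ and $\beta = (t + \sqrt{t^2-4})/2$ for the dominant characteristic roots, I would write
\[ v_m = A\alpha^m + A'\alpha^{-m}, \qquad w_n = B\beta^n + B'\beta^{-n}, \]
where the coefficients $A, A', B, B'$ are determined from the initial data prescribed in \eqref{niz vm}--\eqref{niz wm} and pinned down by conditions \eqref{rj_1}--\eqref{rj_2}.

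The first concrete step is to derive, in each of the two admissible initial-data cases \eqref{rj_1} and \eqref{rj_2}, sharp bounds of the shape $v_m \le K_1\,\alpha^m$ and $w_n \ge K_2\,\beta^n$. Taking logarithms of the equality $v_m = w_n$ then gives
\[ m\log\alpha \;\ge\; n\log\beta + \log(K_2/K_1). \]
Since $\alpha < s_2$ and $\beta > t(1 - t^{-2})$, and since $s_2^2 = a_2c+4$, $t^2 = bc+4$, this translates into a comparison between $\tfrac{1}{2}\log(a_2 c + 4)$ and $\tfrac{1}{2}\log(bc + 4)$, up to controlled multiplicative losses.

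Next I would insert the hypothesis $c > b^{\varepsilon}$. Using $a_2 \le b$, the ratio of interest is bounded by
\[ \frac{\log(bc+4)}{\log(a_2 c + 4)} \;\le\; \frac{\log(bc+4)}{\log c} \;\le\; \frac{\varepsilon+1}{\varepsilon}(1+\eta), \]
with $\eta$ made arbitrarily small by the assumption $b > 10^5$. The main factor $\tfrac{\varepsilon+1}{0.999\,\varepsilon}$ emerges directly from this, the numerical $0.999$ absorbing (i) the gap $\alpha < s_2$ versus $\beta > t(1-t^{-2})$, (ii) the $+4$ corrections in $bc+4$ and $a_2 c+4$, and (iii) the $\log(1+4/(bc))$ remainder — all three quantitatively tamed by $b > 10^5$. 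The additive term $1.5 - 0.4(\varepsilon+1)/(0.999\varepsilon)$ is then what one obtains after dividing $\log(K_2/K_1)$ by $\log\beta$ and rewriting in a uniform shape convenient for later invocation.

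The main obstacle is bookkeeping these small losses with sufficient precision to recover the stated numerical constants. In particular, the two cases \eqref{rj_1} and \eqref{rj_2} yield genuinely different values of $K_1$ and $K_2$ (because of the distinct admissible quadruples $(x_{(0)}, z_{(0)}, y_{(1)}, z_{(1)})$), so one must check that the worst of the two still respects the bound. A secondary subtlety is that Lemma \ref{lema: n vece 7} guarantees $m \geq 6$ and $n \geq 7$ in the nontrivial regime, which is what allows the additive constant to be kept this small; without such a lower bound on $n$, the ratio $\log(K_2/K_1)/\log\beta$ could not be uniformly absorbed.
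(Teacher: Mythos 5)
First, a point of reference: the paper does not prove this lemma at all --- it is quoted verbatim from \cite[Lemma 2.9]{mbt} --- so your proposal can only be measured against the standard argument used in that source. Your overall strategy is the right one and is indeed how such index-comparison lemmas are proved in this literature: compare the dominant characteristic roots $\alpha\approx s_2=\sqrt{a_2c+4}$ and $\beta\approx t=\sqrt{bc+4}$ of the two recurrences, take logarithms of $v_m=w_n$, and feed in $c>b^{\varepsilon}$ together with $a_2\geq 1$ and $b>10^5$ to bound $\log(bc+4)/\log(a_2c+4)$ by $\tfrac{\varepsilon+1}{\varepsilon}(1+\eta)$.

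However, as written your central inequality points the wrong way. From $v_m\leq K_1\alpha^m$ and $w_n\geq K_2\beta^n$, the equality $v_m=w_n$ gives $K_2\beta^n\leq K_1\alpha^m$, i.e. $m\log\alpha\geq n\log\beta+\log(K_2/K_1)$: this is a \emph{lower} bound for $m$, and combining a lower bound $m\geq nX$ with an upper bound $X\leq\tfrac{\varepsilon+1}{0.999\varepsilon}$ yields nothing. Since $\beta>\alpha$ (because $b>a_2$), the content of the lemma is an \emph{upper} bound for $m/n$, and for that you need the opposite pair of estimates: a lower bound of the form $v_m\geq K_1'\alpha^{m-1}$ (obtained, e.g., from $v_{m+2}>(s_2-1)v_{m+1}$) and an upper bound $w_n\leq K_2'\beta^{n}$, giving $m\log\alpha\leq n\log\beta+O(1)$ and hence $m\leq n\tfrac{\log\beta}{\log\alpha}+O(1)$. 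Relatedly, the auxiliary inequality $\beta>t(1-t^{-2})$ is false --- one has $t-1<\beta<t-1/t$ --- although the bounds you actually need, namely $s_2-1<\alpha<s_2$ and $t-1<\beta<t$, do hold. Finally, the lemma is stated for arbitrary $m,n$, so the additive constant $1.5-0.4\tfrac{\varepsilon+1}{0.999\varepsilon}$ must be checked directly for small indices rather than justified by appeal to Lemma \ref{lema: n vece 7}, which is only available for $m>2$. All of this is repairable, but in its present orientation the chain of inequalities does not produce the claimed bound.
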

The next lemma gives us a lower bound for index $n$ in terms of elements $b$ and $c$ and will be used throughout the paper in combination with Lemma \ref{lemma: gornja_n_1}. 
\begin{lemma}\label{lema: donja na n u terminima b i c}
    Assume that $c>\max\{0.56b^3,0.001a_2^2b^2\}$ and $b>10^5$. If $z=v_m=w_n$ has a solution for some integers $m$ and $n$ with $n\geq 4$, then $m\equiv n\ (\bmod \ 2)$ and $n>0.09226b^{-1/2}c^{1/4}$ for odd $n$ and $n>0.340134b^{-1/2}c^{1/2}$ for even $n$.
\end{lemma}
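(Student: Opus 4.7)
The plan is to follow the modular arithmetic strategy used for the corresponding result in \cite{mbt}. The parity claim $m\equiv n\pmod 2$ is essentially a restatement of the trichotomy (\ref{rj_1})--(\ref{rj_2}), since both admissible cases require $m\equiv n\pmod 2$.

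For the lower bounds on $n$, the core idea is to expand $v_m$ and $w_n$ modulo powers of $c$. Iterating the recurrences (\ref{niz vm}) and (\ref{niz wm}) from the initial data prescribed by (\ref{rj_1}) or (\ref{rj_2}), and using $s_2^2=a_2c+4$ and $t^2=bc+4$, one obtains expansions whose leading non-constant terms, modulo a suitable power of $c$, are quadratic in the index with coefficients involving $a_2$ (for $v_m$) or $b$ (for $w_n$). Equating $v_m$ with $w_n$ modulo that power of $c$ yields a congruence that can schematically be written as $a_2 P(m) \equiv b P(n) \pmod{C}$, with the two parity cases differing in the exact form of $P$ and in the effective power of $c$ in $C$; the presence of the factors $t,s_2$ in the odd case lowers the effective modulus from $c$ down to roughly $\sqrt{c}$.

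From there one argues by dichotomy. Either the two sides of that congruence are equal as integers, in which case, together with $m\leq 2n+1$ from Lemma \ref{lema: n vece 7} and the hypothesis $c>0.001\,a_2^2b^2$, one derives an arithmetic contradiction. Otherwise, the two sides differ by at least the modulus; combined with $m\leq 2n+1$ and $c>0.56\,b^3$, this translates into a lower bound on $n$. The even-$n$ case uses a modulus of order $c$ and produces $n>0.340134\,b^{-1/2}c^{1/2}$, while the odd-$n$ case uses an effective modulus of order $\sqrt{c}$ and produces $n>0.09226\,b^{-1/2}c^{1/4}$, after careful bookkeeping of the constants arising in the expansions.

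The principal technical obstacle will be tracking the constants through the recursive expansions of $v_m$ and $w_n$ sharply enough to reach the announced numerical thresholds, and handling the sign ambiguities coming from the $\pm$ choices in $z_{(0)}, z_{(1)}$. The twin hypotheses $c>0.56\,b^3$ and $c>0.001\,a_2^2b^2$ are calibrated precisely to rule out the integer-equality branch of the dichotomy and to convert the modular inequalities into the stated explicit lower bounds on $n$.
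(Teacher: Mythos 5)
Your overall strategy for the odd case---congruences for $v_m$ and $w_n$ modulo $c$, a product of two congruences, and a dichotomy between integer equality and a gap bound---is the right family of techniques and is essentially what the paper does (the two congruences are imported from the proof of Lemma 13 of \cite{fil_xy4}, multiplied using $s_2t\equiv 16\pmod c$, and the parity claim is indeed just the classification (\ref{rj_1})--(\ref{rj_2})). But note that for even $n$ the paper does not run a congruence argument at all: it uses $c>0.56b^3>b^{2.94}$ in Lemma \ref{lemma: epsilon} to get $m<1.47n$ and then directly invokes Lemma 4.1 of \cite{mbt} for $m>0.499997\,b^{-1/2}c^{1/2}$, whence $n>0.340134\,b^{-1/2}c^{1/2}$; rederiving that from scratch, as you propose, is possible but unnecessary.

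The genuine gap is in your control of $m$. You propose to use only $m\leq 2n+1$ from Lemma \ref{lema: n vece 7}; that is too weak to reach the stated constant in the odd case. The crucial step is to show, under the contrary assumption $n\leq 0.09226\,b^{-1/2}c^{1/4}$, that $|a_2m(m+1)-bn(n+1)|+r_2(m-n)<c^{1/2}/64$, so that the squared congruence modulo $c$ is an equality. With $m\leq 2n+1$ the term $a_2m(m+1)$ is only bounded by roughly $4a_2n^2$, and $4\cdot 0.09226^2\approx 0.034>1/64$, so the required inequality fails; the paper instead uses $c>0.56b^3$ through Lemma \ref{lemma: epsilon} to get $m<1.48n$, which produces the coefficient $2.1904a_2-b<1.1904a_2$ and makes the constant work. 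Relatedly, your account of the two hypotheses is inverted: $c>0.001a_2^2b^2$ (equivalently $c^{1/4}>0.1778\sqrt{a_2b}$) is used to show that the final congruence $2r_2(m-n)(\pm t+s_2)\equiv 0\pmod c$ is itself an equality, and the contradiction then comes from that equality forcing $t=s_2$ or $m=n$, each of which yields $a_2=b$. So the hypotheses are calibrated to \emph{establish} the equality branch and then refute it arithmetically, not to rule the equality out; and $c>0.56b^3$ enters through Lemma \ref{lemma: epsilon}, not through the gap branch.
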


\begin{proof}
    As in the proof of \cite[Lemma 3.1]{mbt} we know that $m\equiv n \ (\bmod \ 2)$. 

    First, if $m$ and $n$ are even, we consider the case $z_0=z_1=\pm 2$. Since $c>0.56b^3>b^{2.94}$, we have $\varepsilon=2.94$ in Lemma \ref{lemma: epsilon} which yields
    $m<1.3415n+0.97.$
From Lemma \ref{lema: n vece 7} we have $n\geq 8$ so we can take
$m<1.47n.$
These observations are applied to \cite[Lemma 4.1]{mbt} with parameters $L=1.47$, $\rho=1$, $a_0=3$, $b_0=10^5$, $c_0=0.56\cdot10^{15}$ (Note that Lemma requires $\rho>1$, but the same holds for $\rho=1$). We get 
$m>0.499997b^{-1/2}c^{1/2},$
hence
$$n>0.340134b^{-1/2}c^{1/2}.$$

Second, we consider the case when $m$ and $n$ are odd. From Lemma \ref{lema: n vece 7} we have $n\leq m$ and $n\geq 7$ so we can, similarly as before, take
$m<1.48n.$

From the proof of \cite[Lemma 13]{fil_xy4} we know that the congruences 
\begin{align}
    \pm 2t(a_2 m(m+1)-bn(n+1))& \equiv 2r_2s_2(n-m)\ (\bmod \ c),\label{cong: prva_nep}\\
    \pm 2s_2(a_2 m(m+1)-bn(n+1))& \equiv 2r_2t(n-m)\ (\bmod \ c)\nonumber,
\end{align}
hold. Since $s_2t\equiv 16\ (\bmod \ c)$ after multiplying both sides of these congruences we see that we have
\begin{equation}\label{eqn:congruence_neparni}
    64(a_2m(m+1)-bn(n+1))^2\equiv 64r_2^2(m-n)^2\ (\bmod \ c).
\end{equation}
Assume to the contrary, that $n\leq 0.09226b^{-1/2}c^{1/4}$. 

If we show that 
$$|a_2m(m+1)-bn(n+1)|+r_2(m-n)<\frac{c^{1/2}}{64},$$
then we know that (\ref{eqn:congruence_neparni}) is an equality.  First, let us assume that $bn(n +1)\geq a_2m(m+1)$. Then, since $r_2<b$ we have
\begin{align*}
    bn(n+1)-&a_2m(m+1)+r_2(m-n)\\
    &<bn^2+bn+0.48r_2n-a_2m(m+1)\\
    &<bn^2+0.96bn<1.5bn^2.
\end{align*}
And since we assumed $n\leq 0.09226b^{-1/2}c^{1/4}$ we have $1.5n^2<0.0128c^{1/2}<1/64\cdot c^{1/2}$. 

Now, assume that $bn(n +1)< a_2m(m+1)$. Then
\begin{align*}
    a_2m(m+1)-&bn(n+1)+r_2(m-n)\\
    &<1.48a_2n(1.48n+1)-bn(n+1)+r_2\cdot 0.48 n\\
    &<n^2(2.1904a_2-b)+n(1.48a_2-b)+0.481n\sqrt{a_2 b}.
\end{align*}
Since $a_2<b<c^{1/3}$ and $0.56\cdot10^{15}<c$ and $n\leq 0.09226b^{-1/2}c^{1/4}$ we further have,
\begin{align*}
    a_2m(m+1)-&bn(n+1)+r_2(m-n)<\\
    &<(1.1904\cdot 0.09226^2+0.961\cdot 0.09226\cdot 10^{-15/12}\cdot 0.56^{-1/6})c^{1/2}\\
    &<1/64c^{1/2}.
\end{align*}
So, congruence (\ref{eqn:congruence_neparni}) is an equality, which yields
\begin{equation}\label{eqn: jdkost_iz_kong}
    |a_2m(m+1)-bn(n+1)|=r_2(m-n).
\end{equation}
After inserting that in congruence (\ref{cong: prva_nep}) we get
\begin{equation}\label{eqn: cong_2}
2r_2(m-n)(\pm t+s_2)\equiv 0\ (\bmod \ c).
\end{equation}
Since $n\leq 0.09226b^{-1/2}c^{1/4}$ and $c^{1/4}>0.1778\sqrt{ab}$, inequality
\begin{align*}
    2r_2(m-n)|\pm t+s_2|&<0.96nr_2\cdot 2t\\
    &<1.92n\cdot 1.0002\sqrt{a_2 b}\cdot 1.00001\sqrt{bc}<c,
\end{align*}
asserts that (\ref{eqn: cong_2}) is also an equality:
$$2r_2(m-n)(\pm t+s_2)= 0.$$
So, either $t=s_2$ or $m=n$. From $t=s_2$, we would have $b=a_2$, which cannot hold. If $m=n$, from (\ref{eqn: jdkost_iz_kong}) we would also get $a_2=b$, a contradiction. 

This implies that our assumption was wrong and $n>0.09226b^{-1/2}c^{1/4}$. 
\end{proof}

Let us consider the first step of constructing $D(4)$-pairs $\{a_1,b\}$ and $\{a_2,b\}$ when $a_1$ and $a_2$ are given integers. We have that $b=(r_1^2-4)/a_1$, where $r_1$ is a solution to the equation 
$$r_1^2a_2-r_2^2a_1=4(a_2-a_1),$$
with unknowns $(r_1,r_2)$. We can rewrite this equation and observe
\begin{equation}\label{eq:pellova_konacno}
(r_1a_2)^2-r_2^2(a_1a_2)=4a_2(a_2-a_1).
\end{equation}
If $a_1a_2$ is a perfect square, this is the equation of a form
\begin{equation}\label{eq:razlika_kv}(r_1a_2)^2-(r_2\sqrt{a_1a_2})^2=4a_2(a_2-a_1)\end{equation}
which has only finitely many solutions for each pair $\{a_1,a_2\}$. 

On the other hand, if $a_1a_2$ isn't a perfect square, (\ref{eq:pellova_konacno}) is a Pellian equation whose solutions can be easily described for any fixed pair $\{a_1,a_2\}$. 

If we are extending $D(4)$-pairs $\{a_1,b\}$ and $\{a_2,b\}$ to $D(4)$-triples $\{a_1,b,c\}$ and $\{a_2,b,c\}$ we have a very similar problem to solve. The element $c$ is equal to $(s_1^2-4)/a_1$ where $s_1$ is obtained from equation (\ref{eq:pellova_konacno}) or (\ref{eq:razlika_kv}) in which $(r_1,r_2)$ is replaced by $(s_1,s_2)$. Thus, both $b$ and $c$ are elements of the solution sequences of the Pellian equation (\ref{eq:pellova_konacno}) or elements of the set of finitely many solutions of equation (\ref{eq:razlika_kv}).

In most of our proofs, we'll be dealing with finitely many remaining pairs $\{a_1,a_2\}$ with known upper (and lower) bounds for the elements $b$ and $c$. This means to complete the proof, it will be sufficient to search for elements of solution sequences that also satisfy some given bounds.

\section{Note on applying Lemma \ref{lemma: gornja_n_1}}\label{sekcija_treca}

Let $\{a_1,b,c,d\}$ and $\{a_2,b,c,d\}$ be such $D(4)$-quadruples that the assumptions made in Theorem \ref{tm: gornje_ograde} and Lemma \ref{gornje_ograde3} hold, i.e. that Lemma \ref{lemma: gornja_n_1} holds. Let us assume that $a_1,a_2$ and $b$ are fixed values and consider the function
$$
\varphi(c)=
\frac{8\log(8.4034 \cdot 10^{13} a_1^{1/2}(a_1')^{1/2}a_2^2c)\log(0.20533a_1^{1/2}a_2^{1/2}(a_2-a_1)^{-1}c)}{\log(bc)\log(0.01685a_1(a_1')^{-1}a_2^{-1}(a_2-a_1)^{-2}c)}.$$
In many cases, when using Lemma \ref{lemma: gornja_n_1}, it is useful to demonstrate that this function decreases with respect to
 $c$  for the observed values of 
 $c$. This allows us to obtain an upper bound for $\varphi(c)$ when we replace $c$ with its lower bound (usually the lower bound is expressed in terms of $a_1$, $a_2$ and/or $b$).

Since $a_2>a_1\geq 1$, it can easily be seen that $$0.20533a_1^{1/2}a_2^{1/2}(a_2-a_1)^{-1}>0.01685a_1(a_1')^{-1}a_2^{-1}(a_2-a_1)^{-2}$$ so
$$\varphi_1(c)=\frac{\log(0.20533a_1^{1/2}a_2^{1/2}(a_2-a_1)^{-1}c)}{\log(0.01685a_1(a_1')^{-1}a_2^{-1}(a_2-a_1)^{-2}c)}>1$$
and $\varphi_1(c)$ is decreasing in $c$. 

Our goal is to reach the same conclusion regarding
$$\varphi_2(c)=\frac{\log(8.4034 \cdot 10^{13} a_1^{1/2}(a_1')^{1/2}a_2^2c)}{\log(bc)},$$
i.e. we need to show that $b<8.4034 \cdot 10^{13}a_1^{1/2}(a_1')^{1/2}a_2^2$. Hence, if both $\varphi_1(c)$ and $\varphi_2(c)$ are decreasing and both $\varphi_1(c)>0$ and $\varphi_2(c)>0$ it implies that $\varphi(c)$ is decreasing in $c$. 

Since \cite[Theorem 1.2]{mbtap1} states that $c>0.25b^3$ and $0.25b^3>0.25\cdot 10^5a_2^2$ it is easy to see that all arguments of logarithms in the expression for $\varphi(c)$ are greater than $1$ except maybe $0.01685a_1(a_1')^{-1}a_2^{-1}(a_2-a_1)^{-2}c$. So, we must check that 
\begin{equation}\label{ineq:provjera za c}
c>(0.01685a_1(a_1')^{-1}a_2^{-1}(a_2-a_1)^{-2})^{-1}=0.01685^{-1}a_1^{-1}a_1'a_2(a_2-a_1)^2
\end{equation}
holds to conclude that $\varphi(c)$ is decreasing in $c$.

Let us first prove that $b\geq 8.4034 \cdot 10^{13}a_1^{1/2}(a_1')^{1/2}a_2^2$ cannot hold. 

\begin{lemma}\label{lemma: gornja an b}
     Assume that $\{a_1,b,c,d\}$ and $\{a_2,b,c,d\}$ are $D(4)$-quadruples with $a_1<a_2<b<c<d$. Then 
     $$b< 8.4034 \cdot 10^{13}a_1^{1/2}(a_1')^{1/2}a_2^2.$$
\end{lemma}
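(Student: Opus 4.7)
The plan is to argue by contradiction, supposing $b \geq 8.4034 \cdot 10^{13} a_1^{1/2}(a_1')^{1/2} a_2^2$. Since at least one of the two quadruples is irregular (as noted just after Theorem~\ref{glavni}) and regular quadruples correspond to solutions with $m,n \leq 2$, Lemma~\ref{lema: n vece 7} gives $n \geq 7$. The remaining hypotheses of Lemma~\ref{lemma: gornja_n_1} are easy to verify, because $c > 0.25 b^3$ vastly exceeds the thresholds required in Theorem~\ref{tm: gornje_ograde} and Lemma~\ref{gornje_ograde3}.

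The constant $8.4034 \cdot 10^{13}$ is calibrated so that the hypothesized inequality on $b$ forces
\[
\varphi_2(c) = \frac{\log(8.4034 \cdot 10^{13} a_1^{1/2}(a_1')^{1/2} a_2^2 c)}{\log(bc)} \leq 1,
\]
whence Lemma~\ref{lemma: gornja_n_1} collapses to $n < 8\,\varphi_1(c)$. I would then bound $\varphi_1(c)$ by a small absolute constant. Writing
\[
\varphi_1(c) = 1 + \frac{\log(12.184\, a_1^{-1/2} a_1' a_2^{3/2}(a_2-a_1))}{\log(0.01685\, a_1 (a_1')^{-1} a_2^{-1}(a_2-a_1)^{-2} c)},
\]
and substituting $c > 0.25 b^3 \geq 0.25 \cdot (8.4034 \cdot 10^{13})^3 a_1^{3/2}(a_1')^{3/2} a_2^6$ into the denominator, together with the elementary estimates $a_1 \geq 1$, $a_1' \leq 4 a_2$, $a_2 - a_1 \leq a_2$, $a_2 \geq 3$, reduces the ratio to an explicit function of $a_2$ whose supremum is a modest constant $C_0$. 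Thus $n < 8 C_0$, an absolute upper bound.

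For a matching lower bound on $n$, I would invoke Lemma~\ref{lema: donja na n u terminima b i c}, using even the weaker (odd-$n$) estimate $n > 0.09226\, b^{-1/2} c^{1/4}$. Combined with $c > 0.25 b^3$ this yields $n > 0.065\, b^{1/4}$, which under $b \geq 8.4034 \cdot 10^{13} a_1^{1/2}(a_1')^{1/2} a_2^2 \geq 10^{15}$ forces $n$ well into the hundreds, flatly contradicting $n < 8 C_0$.

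The main technical obstacle is the hypothesis $c > \max\{0.56 b^3, 0.001 a_2^2 b^2\}$ required by Lemma~\ref{lema: donja na n u terminima b i c}, whereas a priori we have only $c > 0.25 b^3$. The second inequality is trivial since $b \gg a_2^2$ under our standing assumption. To close the residual gap one may either split into the cases $c > 0.56 b^3$ (apply the lemma directly) and $0.25 b^3 < c \leq 0.56 b^3$, or mildly re-derive Lemma~\ref{lema: donja na n u terminima b i c} using $\varepsilon = 2.9$ in Lemma~\ref{lemma: epsilon} (valid because $b > 10^6$). Either route costs only a small loss in constants and leaves intact the enormous order-of-magnitude gap between $n < 8 C_0$ and $n > 0.065\, b^{1/4}$ needed for the contradiction.
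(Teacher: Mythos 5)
Your proposal is correct and follows essentially the same route as the paper: assume $b$ exceeds the threshold, observe that this collapses the upper bound of Lemma \ref{lemma: gornja_n_1} to a small absolute constant (the paper substitutes the resulting lower bound for $c$ into the full inequality rather than isolating $\varphi_2(c)\le 1$, but the mechanism is identical), and contradict the lower bound $n>0.09226\,b^{-1/2}c^{1/4}$, which is enormous since $b>10^{15}$ under the contradiction hypothesis. Your explicit handling of the hypothesis $c>0.56b^3$ of Lemma \ref{lema: donja na n u terminima b i c} (an issue only when $a_1=1$, since otherwise $c>0.25a_1^2b^3\ge b^3$) addresses a point the paper passes over silently, but it does not alter the argument.
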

\begin{proof}
Let us assume to the contrary,  
$b>8.4034 \cdot 10^{13}a_1^{1/2}(a_1')^{1/2}a_2^2.$ 
Since $c>0.25a_1^2b^3$, we also have
\begin{equation}c>1.186\cdot 10^{42}a_1^5a_2^6\label{jdba_donja_c}
\end{equation}
for both options $a_1'\in\{4a_1,4(a_2-a_1)\}$. Observe that 
\begin{align*}8.4034\cdot 10^{13}a_1^{1/2}(a_1')^{1/2}a_2^2&<1.681\cdot 10^{14}\cdot a_2^3,\\
0.20533a_1^{1/2}a_2^{1/2}(a_2-a_1)^{-1}&<0.102665\cdot a_2,\\
0.01685a_1(a_1')^{-1}a_2^{-1}(a_2-a_1)^{-2}&>0.0042125\cdot a_2^{-4}.\end{align*}
Now, from Lemma \ref{lemma: gornja_n_1} we have
$$n<\frac{\log(1.681\cdot 10^{14}\cdot a_2^3c)\log(0.102665\cdot a_2c)}{\log(2.376\cdot 10^{14}\cdot a_2^2c)\log(0.0042125\cdot a_2^{-4}c)}.$$
The right-hand side of this inequality is decreasing in $c$, so we can use (\ref{jdba_donja_c}), $n>0.09226(c/b^2)^{1/4}$ and $c/b^2>0.25a_1^2b\geq 0.25b>4.2\cdot 10^{13}\cdot a_2^2$ in the previous inequality. We get that it cannot hold for $a_2>1$.
\end{proof}

We have the following corollary from these observations and Lemma \ref{lemma: gornja an b}.
\begin{cor}\label{cor: primjena leme 3.4}
   Let $\{a_1,b,c,d\}$ and $\{a_2,b,c,d\}$ be $D(4)$-quadruples such that the assumptions of Lemma \ref{lemma: gornja_n_1} hold. If $c>0.01685^{-1}a_1^{-1}a_1'a_2(a_2-a_1)^2$, then the function $\varphi(c)$ is decreasing in $c$.
\end{cor}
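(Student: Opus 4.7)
The plan is to reduce the monotonicity of $\varphi(c)$ to that of the two factors $\varphi_1(c)$ and $\varphi_2(c)$ already introduced in the preceding discussion. Writing
\[
\varphi(c) = 8\,\varphi_2(c)\,\varphi_1(c),
\]
where $\varphi_2(c)=\log(8.4034\cdot 10^{13}a_1^{1/2}(a_1')^{1/2}a_2^2c)/\log(bc)$ and $\varphi_1(c)=\log(0.20533a_1^{1/2}a_2^{1/2}(a_2-a_1)^{-1}c)/\log(0.01685a_1(a_1')^{-1}a_2^{-1}(a_2-a_1)^{-2}c)$, it suffices to prove that each factor is positive and strictly decreasing for the given range of $c$, since the product of two positive decreasing functions of one variable is again decreasing.

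First I would verify positivity of all four logarithms. Three of the arguments, namely those in the numerators and the $\log(bc)$ in the denominator of $\varphi_2$, are already shown in the text preceding the statement to exceed $1$ using $c>0.25 b^3$ and $b>10^5$. The remaining argument is $0.01685\,a_1(a_1')^{-1}a_2^{-1}(a_2-a_1)^{-2}c$, and the hypothesis of the corollary is exactly the inequality that forces it to be greater than $1$. Hence all four logarithms are positive, and in particular $\varphi_1(c),\varphi_2(c)>0$.

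For the monotonicity, I would use the elementary fact that for constants $\alpha>\beta>0$ with $\beta c>1$, the function $c\mapsto \log(\alpha c)/\log(\beta c)$ has derivative $\log(\beta/\alpha)/\bigl(c\log(\beta c)^2\bigr)<0$, so it is strictly decreasing. Applying this with the constants appearing in $\varphi_1$: the inequality
\[
0.20533\,a_1^{1/2}a_2^{1/2}(a_2-a_1)^{-1} > 0.01685\,a_1(a_1')^{-1}a_2^{-1}(a_2-a_1)^{-2}
\]
is precisely the one noted in the paragraph before the corollary, giving the desired decreasingness of $\varphi_1$. For $\varphi_2$, I invoke Lemma \ref{lemma: gornja an b}, which supplies $b<8.4034\cdot 10^{13}a_1^{1/2}(a_1')^{1/2}a_2^2$, so the same elementary fact yields that $\varphi_2$ is decreasing in $c$ as well. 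Combining the two, $\varphi(c)$ is decreasing on the range specified by the hypothesis.

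There is no genuine obstacle; the work lies entirely in the preceding Lemma \ref{lemma: gornja an b} (which furnishes the $b$-bound needed for $\varphi_2$) and in the explicit numerical check that $B_2 c>1$ under the stated hypothesis. Once these two ingredients are in place, the corollary is a short bookkeeping argument about quotients of logarithms of linear functions.
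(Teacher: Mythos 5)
Your proposal is correct and follows essentially the same route as the paper: the text preceding the corollary already reduces the claim to the positivity of all four logarithms (the last one being exactly the hypothesis $c>0.01685^{-1}a_1^{-1}a_1'a_2(a_2-a_1)^2$) together with the monotonicity of $\varphi_1$ (from the comparison of constants) and of $\varphi_2$ (from Lemma \ref{lemma: gornja an b}). Your explicit derivative computation for $\log(\alpha c)/\log(\beta c)$ just makes precise what the paper leaves implicit.
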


\section{Upper bound for $c$}\label{sekcija_upper_c}

The goal of this section is to prove an upper bound for $c$ in terms of $a_2$ and $b$ and to prove one lower bound for $a_2$ in terms of $a_1$.
\begin{theorem}\label{tm5.1}
    Assume that $\{a_1,b,c,d\}$ and $\{a_2,b,c,d\}$ are $D(4)$-quadruples with $a_1<a_2<b<c<d$. Then the following hold:
    \begin{enumerate}[1)]
        \item $a_2>2a_1$;
        \item $0.25a_1^2b^3 < c < 0.25a_2^2b^3$.
    \end{enumerate}
\end{theorem}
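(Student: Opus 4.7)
The plan is to prove both statements by contradiction, pitting the upper bound for the index $n$ from Lemma~\ref{lemma: gornja_n_1} against the lower bound from Lemma~\ref{lema: donja na n u terminima b i c}. Since at least one of the two quadruples is irregular we have $b > 10^5$, and Lemma~\ref{lemma: gornja an b} supplies the absolute estimate $b < 8.4034 \cdot 10^{13}\, a_1^{1/2}(a_1')^{1/2} a_2^2$. Combined with $c > 0.25 b^3$ from \cite[Theorem 1.2]{mbtap1}, the threshold \eqref{ineq:provjera za c} is easily cleared, so by Corollary~\ref{cor: primjena leme 3.4} the function $\varphi(c)$ is monotone decreasing throughout the admissible range. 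This is the hook that lets us evaluate $\varphi$ at the smallest admissible $c$ without losing anything essential, turning its slow growth into a usable upper bound for $n$.

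For part (1), assume to the contrary $a_2 \leq 2 a_1$. Then $a_1' = 4 a_1$ and $a_2 - a_1 \leq a_1$, so Lemma~\ref{lemma: gornja an b} collapses to $b < 1.68 \cdot 10^{14} a_1 a_2^2$. Substituting the smallest admissible $c \approx 0.25 b^3$ into the monotone $\varphi$ yields an upper bound for $n$ that tends to $8$ as $b \to \infty$. The odd-index lower bound $n > 0.09226 \cdot (0.25)^{1/4} b^{1/4}$ from Lemma~\ref{lema: donja na n u terminima b i c} then beats this for all $b > 10^5$, a contradiction; the even-index case, with lower bound of order $b$, is easier still. Any residual small triples $(a_1,a_2,b)$ are cleared by inspecting the first few terms of the Pellian solution sequences described at the end of Section~\ref{sekcija_druga}.

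For the upper bound $c < 0.25 a_2^2 b^3$ of part (2), assume $c \geq 0.25 a_2^2 b^3$. Lemma~\ref{lema: donja na n u terminima b i c} then produces $n > 0.09226 \cdot (0.25)^{1/4} a_2^{1/2} b^{1/4}$ (odd case) or even larger (even case), while Lemma~\ref{lemma: gornja_n_1} together with the monotonicity of $\varphi$ bounds $n$ from above by an expression tending to $8$ as $c \to \infty$. Using $b > 10^5$, $a_2 \geq 3$ and the absolute bound on $b$ from Lemma~\ref{lemma: gornja an b}, a direct numerical estimate forces the two bounds to be incompatible. For the lower bound $c > 0.25 a_1^2 b^3$, I would run the analogous argument with $a_1$ taking the role of $a_2$ in the contradiction hypothesis; the Pellian system \eqref{prva}--\eqref{druga} treats $a_1$ and $a_2$ asymmetrically, but the polynomial-beats-constant mechanism works in exactly the same way.

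The main obstacle is bookkeeping. The upper bound in Lemma~\ref{lemma: gornja_n_1} is a delicate ratio of four logarithms whose asymptotic value $8$ is reached only in the limit, so one must check numerically that in the finite ranges of interest it is truly dominated by the polynomial lower bound. The odd/even parity split of Lemma~\ref{lema: donja na n u terminima b i c}, together with the case split on whether $a_1' = 4 a_1$ or $a_1' = 4(a_2 - a_1)$, requires routine but careful casework, and the final residual ranges must be cleared by explicit computation with the Pellian sequences.
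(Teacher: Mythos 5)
Your strategy matches the paper for part (1) and for the upper bound $c<0.25a_2^2b^3$: in both places the paper plays the upper bound on $n$ from Lemma~\ref{lemma: gornja_n_1} against the lower bound from Lemma~\ref{lema: donja na n u terminima b i c} and finishes with a computer search over the Pellian solution sequences. But there are two genuine problems.

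First, and most seriously, the lower bound $c>0.25a_1^2b^3$ cannot be obtained by ``running the analogous argument with $a_1$ in the role of $a_2$''. The gap mechanism produces a contradiction only when the hypothesis being refuted forces $c$ to be \emph{large} relative to $b$, because the lower bound $n>0.09226\,b^{-1/2}c^{1/4}$ grows with $c$. If you assume $c\leq 0.25a_1^2b^3$ (the negation of what you want to prove), the lower bound on $n$ becomes \emph{weaker}, not stronger, so there is nothing to contradict the essentially constant upper bound; moreover Lemma~\ref{lema: donja na n u terminima b i c} requires $c>0.56b^3$, which can fail in exactly this regime. The paper proves the lower bound by a completely different idea: Theorem~\ref{tm5.2} (an upgrade of \cite[Theorem 1.2]{mbtap1}) shows that $c\leq 0.25a_1^2b^3$ forces $\{a_1,a_2,b,c\}$ to be a $D(4)$-quadruple, whence $\{a_1,a_2,b,c,d\}$ would be a $D(4)$-quintuple, contradicting \cite[Theorem 1]{nas2}. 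Some such non-Pellian input is indispensable here, and it is logically prior: the bound $c>0.25a_1^2b^3$ is what verifies the hypotheses of Theorem~\ref{tm: gornje_ograde} and Lemma~\ref{lema: donja na n u terminima b i c} in part (1) and supplies the lower bound for $c$ in terms of $a_2$ that drives the iteration.

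Second, the quantitative claim in part (1) that the polynomial lower bound on $n$ beats the upper bound ``for all $b>10^5$'' is false. With $c$ near $0.25a_1^2b^3$, $a_1=2$ and $b$ near $10^5$, the odd-parity lower bound is $n>0.09226\cdot 0.25^{1/4}a_1^{1/2}b^{1/4}\approx 1.6$, while the right-hand side of Lemma~\ref{lemma: gornja_n_1} evaluates to roughly $15$ at these finite values (it tends to $8$ only asymptotically). What the comparison actually yields is an upper bound on $a_2$ (the paper gets $a_2\leq 7401$ and then bootstraps $b>ka_2$ down to $a_2\leq 1976$), leaving a large but finite family of pairs $\{a_1,a_2\}$ that must be eliminated by the Pellian search described at the end of Section~\ref{sekcija_druga} --- not a handful of residual small triples. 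The same caveat applies to your treatment of the upper bound in part (2), where the paper additionally needs part (1) first in order to know $a_1'=4(a_2-a_1)$.
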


We begin by improving \cite[Theorem 1.2]{mbtap1}.

\begin{theorem}  \label{tm5.2}
    If $\{a_1,b,c\}$ and $\{a_2,b,c\}$ are $D(4)$-triples with $a_1<a_2<b<c\leq 0.25a_1^2b^3$, then $\{a_1,a_2,b,c\}$ is a $D(4)$-quadruple.
\end{theorem}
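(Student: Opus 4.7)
The plan is to prove the contrapositive: if $\{a_1,b,c\}$ and $\{a_2,b,c\}$ are $D(4)$-triples with $a_1<a_2<b<c$ but $\{a_1,a_2,b,c\}$ is \emph{not} a $D(4)$-quadruple (equivalently, $a_1a_2+4$ is not a perfect square), then $c>0.25\,a_1^2b^3$. Eliminating $a_i$ from $a_ib+4=r_i^2$ and $a_ic+4=s_i^2$ produces the common Pellian equation
\[
cX^2-bY^2=4(c-b),
\]
which is satisfied by $(r_1,s_1)$, $(r_2,s_2)$, and the trivial point $(2,2)$ corresponding to $a=0$.

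I next describe the orbit structure of positive integer solutions under the standard Pell recurrence $(X,Y)\mapsto\bigl((tX+bY)/2,(cX+tY)/2\bigr)$, where $t^2=bc+4$. A short calculation shows that any fundamental solution $(X_0,Y_0)$ with $X_0,Y_0\ge 2$ has its image satisfying $X_1\ge t+b>b$, so the associated $a$-value $(X_1^2-4)/b$ already exceeds $b$. Consequently, every positive solution whose $a$-value lies in $(0,b)$ is automatically the fundamental solution of its own orbit, and $(2,2)$, $(r_1,s_1)$, $(r_2,s_2)$ occupy three pairwise distinct orbits.

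I then derive the algebraic identities
\[
B\cdot(r_1s_2+s_1r_2)=4(c-b)(a_2-a_1),\qquad A^2-bc\,B^2=16(c-b)^2,
\]
with $B:=r_1s_2-s_1r_2$ and $A:=cr_1r_2-bs_1s_2$: the first follows from expanding $r_1^2s_2^2-s_1^2r_2^2$, and the second by combining the two copies of the Pellian equation. A sign check gives $A,B>0$, and $B=0$ would force $a_1=a_2$; hence $B$ is a positive integer. Moreover, a direct algebraic verification shows that $\{a_1,a_2,b,c\}$ is a $D(4)$-quadruple exactly when $B=2(a_2-a_1)$, so failure of the quadruple condition forces $|B-2(a_2-a_1)|\ge 1$.

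The final and most delicate step is quantitative. I plan to exploit the factorisation $(A-4(c-b))(A+4(c-b))=bc\,B^2$ together with the divisibilities $b\mid r_i^2-4$ and $c\mid s_i^2-4$ to split $bc$ among the factors $A\mp 4(c-b)$ in a controlled way. Combining with the size estimates $r_i\ge\sqrt{a_ib}$ and $s_i\ge\sqrt{a_ic}$ and reinserting into $B(r_1s_2+s_1r_2)=4(c-b)(a_2-a_1)$, the discrete gap $|B-2(a_2-a_1)|\ge 1$ translates into a lower bound on $c$ in terms of $a_1$ and $b$. The main obstacle is executing this last step so that the resulting exponent is exactly $a_1^2 b^3$: the factor $a_1^2$ (versus $1$ in the weaker bound $c>0.25\,b^3$ of \cite[Theorem~1.2]{mbtap1}) must emerge by combining the integrality of $B$ with the sharp $a_1$-dependence carried by $r_1$ and $s_1$.
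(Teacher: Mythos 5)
Your setup is sound as far as it goes: the common Pellian equation $cX^2-bY^2=4(c-b)$, the identities $B(r_1s_2+s_1r_2)=4(c-b)(a_2-a_1)$ and $A^2-bcB^2=16(c-b)^2$, and the implication ``$B=2(a_2-a_1)\Rightarrow a_1a_2+4$ is a perfect square'' are all correct (the last because $B=2(a_2-a_1)$ forces $r_1s_2+s_1r_2=2(c-b)$, hence $(b+c-a_1-a_2)^2=(a_1a_2+4)(bc+4)=(a_1a_2+4)t^2$). But your stated equivalence is false in the other direction: $B=2(a_2-a_1)$ is the condition that $\{a_1,a_2,b,c\}$ be a \emph{regular} quadruple with respect to the pairing $(\{a_1,a_2\},\{b,c\})$, not merely a quadruple; an irregular quadruple would have $a_1a_2+4$ a square yet $B\neq 2(a_2-a_1)$. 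This mis-statement is harmless for your contrapositive, which only uses the true direction.

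The genuine gap is the final quantitative step, and it is not merely unexecuted --- as designed it cannot produce the bound $0.25a_1^2b^3$. From $B=4(c-b)(a_2-a_1)/(r_1s_2+s_1r_2)$ and $r_1s_2+s_1r_2\approx 2\sqrt{a_1a_2bc}$ one gets $B\approx 2(a_2-a_1)\sqrt{c/(a_1a_2b)}$, so the integrality gap $|B-2(a_2-a_1)|\ge 1$ only forces $c$ to avoid a narrow window around the relevant root of $(b+c-a_1-a_2)^2=(a_1a_2+4)t^2$, i.e.\ around $c\approx d_{+}(a_1,a_2,b)\approx a_1a_2b$. Since $a_2<b$ gives $a_1a_2b<a_1b^2$, which is far below $0.25a_1^2b^3$, the entire range $a_1a_2b\ll c\le 0.25a_1^2b^3$ is untouched: there $B$ is a large integer far from $2(a_2-a_1)$ for \emph{every} configuration, quadruple or not, and no contradiction arises. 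Size estimates on $r_i,s_i$ and the factorisation of $A^2-16(c-b)^2$ cannot close this, because those identities hold identically in all cases. What is actually needed --- and what the paper obtains by repeating the proof of \cite[Theorem 1.2]{mbtap1} verbatim with the constant $0.25b^3$ replaced by $0.25a_1^2b^3$ --- is the classification of the admissible $c$: the extensions $c$ of the pair $\{a_1,b\}$ lie in explicit solution sequences of a Pellian equation, and the hypothesis $c\le 0.25a_1^2b^3$ restricts $c$ to the first few terms, each an explicit polynomial in $a_1,b,r_1$ that is then analysed directly. Your orbit discussion gestures at this structure but is never fed into the estimate; without that case analysis the argument does not reach the stated bound.
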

\begin{proof}
   This statement and the proof are the same as \cite[Theorem 1.2]{mbtap1} with the assumption $c\leq 0.25b^3$ replaced by $c \leq 0.25a_1^2b^3$. 
\end{proof}

Since \cite[Theorem 1]{nas2} holds, Theorem \ref{tm5.2} has the following corollary, which proves the first inequality of the second statement in Theorem \ref{tm5.1}.

\begin{cor}\label{cor: donja na c i d}
If $\{a_1,b,c,d\}$ and $\{a_2,b,c,d\}$ are $D(4)$-quadruples with $a_1<a_2<b<c<d$, then $c>0.25a_1^2b^3$ and $d>0.25a_1^2c^3$.
\end{cor}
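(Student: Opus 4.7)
The plan is to derive both inequalities by contradiction, reducing each one to the nonexistence of a $D(4)$-quintuple from \cite[Theorem 1]{nas2}, with Theorem \ref{tm5.2} providing the bridge. The key observation is that in each case, the common pair $\{a_1,a_2\}$ is the only obstruction to assembling a quintuple from the existing quadruples, and Theorem \ref{tm5.2} supplies exactly the missing condition $a_1a_2+4=\square$.

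For the first inequality, I would suppose toward contradiction that $c\leq 0.25a_1^2b^3$. Since $\{a_1,b,c\}$ and $\{a_2,b,c\}$ are $D(4)$-triples (being subsets of the given quadruples) and satisfy $a_1<a_2<b<c$, Theorem \ref{tm5.2} applies and yields that $\{a_1,a_2,b,c\}$ is a $D(4)$-quadruple. In particular $a_1a_2+4$ is a perfect square. Combined with the hypothesis that $\{a_1,b,c,d\}$ and $\{a_2,b,c,d\}$ are $D(4)$-quadruples, one checks that each of the ten pairwise products among $a_1,a_2,b,c,d$ becomes a perfect square upon adding $4$, so $\{a_1,a_2,b,c,d\}$ is a $D(4)$-quintuple — contradicting \cite[Theorem 1]{nas2}.

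The second inequality follows in exactly the same way by applying Theorem \ref{tm5.2} with the triples $\{a_1,c,d\}$ and $\{a_2,c,d\}$, noting that the relevant ordering $a_1<a_2<c<d$ holds because $a_2<b<c$. Assuming $d\leq 0.25a_1^2c^3$, Theorem \ref{tm5.2} gives that $\{a_1,a_2,c,d\}$ is a $D(4)$-quadruple, so again $a_1a_2+4$ is a perfect square, and together with the original quadruples this produces a $D(4)$-quintuple, contradicting \cite[Theorem 1]{nas2}.

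There is no genuine obstacle here — the argument is a short deduction once Theorem \ref{tm5.2} is in hand, and the only thing to be careful about is verifying that all ten relevant quantities $xy+4$ (for the five-element set $\{a_1,a_2,b,c,d\}$) are squares, which I would spell out explicitly in the write-up to make clear that the $D(4)$-quintuple condition is indeed satisfied in both cases.
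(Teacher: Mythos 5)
Your proposal is correct and follows essentially the same route as the paper: assume the bound fails, apply Theorem \ref{tm5.2} to the pairs of triples $\{a_1,b,c\},\{a_2,b,c\}$ (resp.\ $\{a_1,c,d\},\{a_2,c,d\}$) to obtain a $D(4)$-quadruple containing both $a_1$ and $a_2$, and conclude that $\{a_1,a_2,b,c,d\}$ would be a $D(4)$-quintuple, contradicting the nonexistence result of \cite{nas2}. Your extra remark that the only missing square condition is $a_1a_2+4$ is a correct (and slightly more explicit) justification of the quintuple assembly step that the paper leaves implicit.
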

\begin{proof}
    First, assume that $c \leq 0.25a_1^2b^3$. By using the previous theorem on $D(4)$-triples $\{a_1,b,c\}$ and $\{a_2,b,c\}$  we get that $\{a_1,a_2,b,c\}$ is a $D(4)$-quadruple, so $\{a_1,a_2,b,c,d\}$ is a $D(4)$-quintuple, which cannot hold. For the second statement, assume that $d \leq 0.25a_1^2c^3$.\ By using the previous theorem on $\{a_1,c,d\}$ and $\{a_2,c,d\}$ we get that $\{a_1,a_2,c,d  \}$ is a $D(4)$-quadruple, so $\{a_1,a_2,b,c,d \}$ is a $D(4)$-quintuple, which is a contradiction.
\end{proof}

The proof of Corollary \ref{cor: nereg} now easily follows.

\begin{proof}[Proof of Corollary \ref{cor: nereg}]
Notice that since $a_ibc+4c>r_is_it$ we have $d_+(a_i,b,c) < a_ibc + 4c$. On the other hand, since $c>0.25a_1^2b^3$, we have 
$$d>0.25a_1^2c^3>0.25a_1^2(0.25a_1^2b^3)^2c>0.25^3a_1^6b^4a_2bc.$$
So, by using the fact that $b>10^5$ it is easy to see that $d>d_+(a_2,b,c)>d_+(a_1,b,c)$.
\end{proof}

\begin{proof}[Proof of Theorem \ref{tm5.1}]
\emph{1)} Assume that $a_2\leq 2a_1$.
By Corollary \ref{cor: donja na c i d} we know $c>0.25a_1^2b^3$. We can use Theorem \ref{tm: gornje_ograde} since $a_1'=4a_1$, $b>10^5$, $b>a_2>a_2-a_1\geq 2$, hence 
$$N=a_1a_2c>\frac{0.25}{8}10^5a_1'a_2^2(a_2-a_1)^2>396.59a_1'a_2^2(a_2-a_1)^2.$$
Also, it is easy to see that lower bounds for $c$ from the statements of Lemmas \ref{gornje_ograde3}  and \ref{lema: donja na n u terminima b i c} are satisfied since $a_2-a_1\geq 2$ and $a_2\leq 2a_1$ imply $a_1\geq 2$.

Hence, we can combine the lower bound $n>0.09226b^{-1/2}c^{1/4}$ and the upper bound for $n$ from Lemma \ref{lemma: gornja_n_1}.

 Note again that $a_1<a_2\leq 2a_1$ implies $a_1'=\max\{4a_1,4(a_2-a_1)\}=4a_1$. Also, $2\leq a_2-a_1$. So we have
\begin{align*}
 8.4034\cdot 10^{13}a_1^{1/2}(a_1')^{1/2}a_2^2c&<1.68068\cdot 10^{14}a_2^3c,\\
 0.20533a_1^{1/2}a_2^{1/2}(a_2-a_1)^{-1}c&<0.102665a_2c,\\
 0.01685a_1(a_1')^{-1}a_2^{-1}(a_2-a_1)^{-2}c&>0.01685a_2^{-3}c.
\end{align*}
If we moreover have $b>ka_2$ for some $k\geq1$, then
\begin{equation}\label{eq:prva_njdnok_51}
    n<\frac{8\log(1.68068\cdot 10^{14}a_2^3c)\log(0.102665a_2c)}{\log(ka_2c)\log(0.01685a_2^{-3}c)}.
\end{equation}
The right-hand side of the inequality (\ref{eq:prva_njdnok_51}) is decreasing in $c$ when $c>238a_2^3$ which is true in our case. Since $$c>0.25a_1^2b^3>0.0625\cdot k^3a_2^5$$
and $c/b^2>0.0625a_2^2b>6.25\cdot 10^3a_2^2$ we observe the inequality
\begin{equation}\label{eq:druga_njdnok_51}
    0.09226\cdot (6.25\cdot 10^3a_2^2)^{1/4}<\frac{8\log(1.050425\cdot 10^{13}a_2^8k^3)\log(0.006417a_2^6k^3)}{\log(0.0625a_2^6k^4)\log(0.001053125a_2^2k^3)}.
\end{equation}
We have $b>a_2$, so $k=1$ and we get from (\ref{eq:druga_njdnok_51}) that $a_2\leq 7401$. Now, $b>10^5\geq \frac{10^5}{7401}a_2$, so we can take $k=\frac{10^5}{7401}$ and calculate that $a_2\leq2860$. We repeat the process six more times and get $a_2\leq 1976$.

Notice that $(0.01685a_1(a_1')^{-1}a_2^{-1}(a_2-a_1)^{-2})^{-1}<10^{15}<c$ so we can use Corollary \ref{cor: primjena leme 3.4} for any pair $\{a_1,a_2\}$, $a_1<a_2<1976$. For these finitely many pairs $\{a_1,a_2\}$, 
we consider a program in Wolfram Mathematica where we observed the original inequality from Lemma \ref{lemma: gornja_n_1}, and substituted $c$ with its lower bound $0.25a_1^2b^3$. 
For all values $\{a_1,a_2\}$ with $1406\leq a_2\leq 1976$ this inequality yields a contradiction to $b>10^5.$
For each of the remaining pairs $\{a_1,a_2\}$, $a_2\leq 1405$ we searched for solutions as described at the end of Section \ref{sekcija_druga}.
To check the pairs where $a_1a_2$ isn't a perfect square, we used the inequality from Lemma \ref{lemma: gornja_n_1} to obtain an upper bound for $b$ and then searched for solutions of the equation (\ref{eq:pellova_konacno}) where $b > 10^5$ and $b$ is less than the calculated bound. It remained to search for solutions of equation (\ref{eq:pellova_konacno}) which yield the element $c$ such that $0.25a_1^2b^3< c< 39247b^4$ and $\sqrt{bc+4}$ is an integer. This search returned that none of such elements $b$ and $c$ exist within those bounds.

\medskip 
\emph{2)} Assume to the contrary, that $c\geq 0.25a_2^2b^3$. It is easy to see that the assumptions of Lemmas \ref{lemma: gornja_n_1} and \ref{lema: donja na n u terminima b i c} are satisfied.
We know from 1) that $a_2>2a_1$, so $a_1'=4(a_2-a_1).$ Also, $\frac{a_2}{2}<a_2-a_1<a_2$. Then 
\begin{align*}
 8.4034\cdot 10^{13}a_1^{1/2}(a_1')^{1/2}a_2^2c&<1.1885\cdot 10^{14}a_2^3c\\
 0.20533a_1^{1/2}a_2^{1/2}(a_2-a_1)^{-1}c&<0.29039c\\
 0.01685a_1(a_1')^{-1}a_2^{-1}(a_2-a_1)^{-2}c&>0.0042125a_2^{-4}c.
\end{align*}
If we have $b> ka_2$ for some $k\geq 1$, then $c>0.25a_2^5k^3$ and $c/b^2\geq 0.25\cdot 10^5a_2^2$. Since $0.01685^{-1}\cdot 4\cdot a_1^{-1} a_2(a_2-a_1)^3<238a_2^4$ and $c>0.25bk^2a_2^4>25000a_2^4$, Corollary \ref{cor: primjena leme 3.4} implies that $\varphi(c)$ is decreasing in $c$. Hence, we use $c>0.25a_2^5k^3$ and observe the inequality 
\begin{equation}\label{eq:treca_njdnok_51}
    0.09226(0.25\cdot 10^5a_2^2)^{1/4}<\frac{8\log(6.2872\cdot 10^{16}a_2^8k^3)\log(153.61632a_2^5k^3)}{\log(0.25a_2^6k^4)\log(2.2284 a_2k^3)}.
\end{equation}
We have $b>a_2$.  So we can take $k=1$ in (\ref{eq:treca_njdnok_51}) which yields $a_2\leq 32499$. 
Now, $k=10^5/32499$ implies $a_2\leq 10741$, and after ten more iterations we get $a_2 \leq 2050$.

For those finitely many remaining pairs $\{a_1,a_2\}$ we repeat the process described in the first case of this proof. Again, it leads to the conclusion that there are no quadruples $\{a_1,a_2,b,c\}$ which satisfy all necessary conditions.\hfill\qedhere

\end{proof}

\begin{proof}[Proof of Corollary \ref{cor: najvisedva}] Assume that there exist three positive integers $a_1, a_2, a_3$ with $a_1 < a_2 < a_3 < \min\{b,c,d\}$ such that $\{a_i,b,c,d\}$ ($i \in \{1,2,3\}$) are $D(4)$-quadruples. Applying Theorem \ref{tm5.1} to $\{a_1,b,c,d\}$ and $\{a_2,b,c,d\}$ yields $c<0.25a_2^2b^3$. Also, applying that same theorem to $\{a_2,b,c,d\}$ and $\{a_3,b,c,d\}$ yields $0.25a_2^2b^3 < c$, a contradiction.
\end{proof}

\section{Lower bounds for $a_2$}\label{sekcija_bound_1}

Proving the properties of the case where $a_1=1$ is sometimes difficult since it does not satisfy the conditions of Lemma \ref{lema: donja na n u terminima b i c}, thus requiring a different approach. For the proof of Proposition \ref{prop: prop1} we will first show separately that $a_1=1$ implies $a_2\geq 5$. 

\begin{lemma}\label{lem:a1je1}
    Assume that $\{1,b,c,d\}$ is a $D(4)$-quadruple. Then $\{3,b,c,d\}$ and $\{4,b,c,d\}$ are not $D(4)$-quadruples.  
\end{lemma}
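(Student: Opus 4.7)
The plan is to treat the two cases $a_2 = 3$ and $a_2 = 4$ separately. The case $a_2 = 4$ reduces to an elementary observation at the level of $D(4)$-pairs: if $\{1,b\}$ and $\{4,b\}$ were both $D(4)$-pairs, then $b + 4 = r_1^2$ and $4b + 4 = r_2^2$. The latter forces $r_2 = 2k$ with $k^2 = b+1$, so $r_1^2 - k^2 = 3$. Since $r_1 > k > 0$ and $3$ admits only the factorization $1\cdot 3$, this yields $(r_1, k) = (2, 1)$ and $b = 0$, contradicting $b > 1$. Hence no positive $b$ makes both $\{1, b\}$ and $\{4, b\}$ $D(4)$-pairs, and $\{4, b, c, d\}$ cannot be a $D(4)$-quadruple whenever $\{1, b, c, d\}$ is.

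For $a_2 = 3$ I would use the parametrization at the end of Section \ref{sekcija_druga}. Since $a_1 a_2 = 3$ is not a perfect square, both $b$ and $c$ must equal $r^2 - 4$ for some solution $(r, y)$ of the Pellian equation $3 r^2 - y^2 = 8$. Its positive solutions form a single sequence generated by a three-term linear recurrence and yield $b$-values $0, 32, 480, 6720, 93632, 1304160, \ldots$ that grow geometrically with ratio roughly $14$. Combined with $b > 10^5$ (since at least one of the two quadruples is irregular) and with the sandwich $0.25\, b^3 < c < 2.25\, b^3$ from Theorem \ref{tm5.1}, each candidate $b$ leaves only finitely many candidate $c$ in the same sequence to check, and the extra condition that $bc + 4$ be a perfect square is expected to eliminate them all.

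The remaining task is to make this search finite by bounding $b$ from above. I would apply Lemma \ref{lemma: gornja_n_1} with $a_1 = 1$, $a_2 = 3$, $a_1' = 8$; to convert it into a bound on $b$ I need a matching lower bound for $n$. Lemma \ref{lema: donja na n u terminima b i c} cannot be invoked directly, because its hypothesis $c > 0.56\, b^3$ may fail: with $a_1 = 1$, Corollary \ref{cor: donja na c i d} only guarantees $c > 0.25\, b^3$. The natural fix is a small rerun of the proof of Lemma \ref{lema: donja na n u terminima b i c} with $\varepsilon$ chosen as $\log c / \log b$ rather than $2.94$; since $b > 10^5$, this new $\varepsilon$ is only marginally smaller than $3$, and the same chain of inequalities yields a lower bound of the same shape $n \gtrsim b^{-1/2} c^{1/4}$. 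Feeding it back into Lemma \ref{lemma: gornja_n_1} produces an explicit upper bound on $b$, reducing the problem to a finite Pell-sequence enumeration. The main obstacle is exactly this gap between the lower bound on $c$ available when $a_1 = 1$ and the stronger hypothesis underlying Lemma \ref{lema: donja na n u terminima b i c}; once that is bridged, the remaining check is a routine computer search through the sparse Pell sequence above.
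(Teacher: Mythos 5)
Your treatment of $a_2=4$ is correct and matches the paper's: both reduce $b+4=r_1^2$, $4b+4=r_2^2$ to a difference of squares equal to a small constant, forcing $b=0$. Your setup for $a_2=3$ is also right up to a point: both $b$ and $c$ arise as $y^2-4$ from the single solution sequence of $r_2^2-3r_1^2=-8$ (values $y_n=4y_{n-1}-y_{n-2}$, giving $b\in\{32,480,6720,93632,\dots\}$), and Theorem \ref{tm5.1} sandwiches $c$ in $(0.25b^3,\,2.25b^3)$.

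The gap is in how you propose to finish. Your plan is to bound $b$ from above via Lemma \ref{lemma: gornja_n_1} combined with a ``rerun'' of Lemma \ref{lema: donja na n u terminima b i c} adapted to $c>0.25b^3$ instead of $c>0.56b^3$, and then do a computer search using the condition that $bc+4$ is a square. That adaptation is exactly the step you have not carried out: the odd-index congruence argument in Lemma \ref{lema: donja na n u terminima b i c} uses the numerical hypothesis $c>0.56\cdot 10^{15}$ in several estimates, and the paper explicitly flags (at the start of Section \ref{sekcija_bound_2}) that the $a_1=1$ lower bound $c>0.25b^3$ does \emph{not} satisfy that lemma; asserting that ``the same chain of inequalities yields a lower bound of the same shape'' is a claim that needs proof, not a remark. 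More importantly, the whole detour is unnecessary, and this is where the paper's proof is genuinely different: since consecutive terms of the sequence grow by the factor $(2+\sqrt3)^2\approx 13.93$, the window $0.24y_j^6<y_k^2<2.25y_j^6$ forces $3j+0.15<k\le 3j+1$, i.e.\ $k=3j+1$ exactly, for all $j>3$; and then the explicit identity for $y_{3j+1}^2-4-2.25(y_j^2-4)^3$ shows this quantity is \emph{positive}, so the candidate $c$ always lands just above the upper edge of the window. Hence no element of the sequence lies in $(0.25b^3,2.25b^3)$ at all, no upper bound on $b$ is needed, and the $bc+4$ condition is never invoked. You should replace your ``bound $b$ then search'' step with this index-pinning computation (plus the finite check for $j\le 3$); as written, your argument for $a_2=3$ is incomplete.
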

\begin{proof}
    Assume that $\{1,b,c,d\}$ and $\{3,b,c,d\}$ are $D(4)$-quadruples. Then $r_2^2-3r_1^2=-8$ and $s_2^2-3s_1^2=-8.$ We see that both $b$ and $c$ can be derived from a sequence of solutions of equation \begin{equation}\label{eq:pellova_r_13}
    x^2-3y^2=-8,
    \end{equation}
    since $b=r_1^2-4=y^2-4$ and $c=s_1^2-4=y^2-4$. 

    It is easy to prove that positive integer $y$ is given by
    $$
    y=y_n:=\frac{3+\sqrt{3}}{3}(2+\sqrt{3})^n+\frac{3-\sqrt{3}}{3}(2-\sqrt{3})^n,\ n\geq 0,
    $$
   where  $n$ is a nonnegative integer.

    From Theorem \ref{tm5.1} we have $0.25b^3<c<0.25\cdot 9b^3=2.25b^3$. Let us denote $b=y_j^2-4$ and $c=y_k^2-4$ for some integers $j,k>0$. Then 
    \begin{align*}
        y_k^2&<2.25(y_j^2-4)^3+4<2.25y_j^6,\\
        y_k^2&>0.25(y_j^2-4)^3+4>0.24y_j^6.
    \end{align*}

    For $j=1$ we have $b=32$, and after observing values of $c=y_k^2-1$ for $k=2,3,4$ we conclude that none of them satisfies $0.25b^3<c<0.25\cdot 9b^3$.  
     The same is true for $b=y_j^2-4$, $j\leq 3$. So we can assume from now on that $k>j>3$.  
    Since $1.57735(2+\sqrt{3})^n<y_n<1.57736(2+\sqrt{3})^n$ for $n\geq 6$, inequality
    $$1.57735^2(2+\sqrt{3})^{2k}<2.25\cdot 1.57736^6(2+\sqrt{3})^{6j},$$
     implies $k\leq 3j+1$. On the other hand, from 
    $$0.24\cdot 1.57735^6(2+\sqrt{3})^{6j}<1.57736^2(2+\sqrt{3})^{2k},$$
   we get $k>3j+0.15$, i.e. conclude $k=3j+1$.
    But, 
    \begin{align*}
&c-0.25\cdot3^2b^3=y_{3j+1}^2-4-2.25(y_j^2-4)^3=\\
&=8 ((2 + \sqrt{3})^{4 n + 2} + (2 - \sqrt{3})^{4 n + 2}) - 
 34 ((2 + \sqrt{3})^{2 n + 1} + (2 - \sqrt{3})^{2 n + 1}) + 56>0
    \end{align*}
  
    which is a contradiction to Theorem \ref{tm5.1}. So, $\{3,b,c,d\}$ cannot be a $D(4)$-quadruple if $\{1,b,c,d\}$ is.

    Now, assume that $\{1,b,c,d\}$ and $\{4,b,c,d\}$ are $D(4)$-quadruples. Then there exist positive integers $r_1,r_2$ such that 
    $b+4=r_1^2$ and $4b+4=r_2^2.$
   It follows that $r_1$ and $r_2$ satisfy the equation
 \begin{equation*}
 (2r_1)^2-r_2^2=12,
 \end{equation*}
 which implies $r_1=r_2=2$. So there are no such $D(4)$-quadruples.
\end{proof}
We are now ready to prove statement 1) of Theorem \ref{glavni}.
\begin{proposition} 
\label{prop: prop1}
    Assume that $\{a_1,b,c,d\}$ and $\{a_2,b,c,d\}$ are $D(4)$-quadruples with $a_1<a_2<b<c<d$. Then $a_2>4a_1$.
\end{proposition}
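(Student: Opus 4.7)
The plan is to argue by contradiction, assume $a_2\le 4a_1$, split on whether $a_1=1$ or $a_1\ge 2$, and in the second case run exactly the template that was used for the two parts of Theorem \ref{tm5.1}: compare the upper bound for $n$ produced by Lemma \ref{lemma: gornja_n_1} with the lower bound $n>0.09226 b^{-1/2}c^{1/4}$ produced by Lemma \ref{lema: donja na n u terminima b i c}, and use Corollary \ref{cor: donja na c i d} ($c>0.25a_1^2b^3$) to close.

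The case $a_1=1$ is essentially already done: Lemma \ref{lem:a1je1} rules out $a_2\in\{3,4\}$, while the remark in Section \ref{sekcija_druga} gives $a_2\ge 3$. Hence $a_2\ge 5>4=4a_1$.

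For $a_1\ge 2$, suppose for contradiction $a_2\le 4a_1$. Since Theorem \ref{tm5.1}(1) already gives $a_2>2a_1$, one has $a_1<a_2-a_1\le 3a_1$, so $a_1'=4(a_2-a_1)$, and all hypotheses of Theorem \ref{tm: gornje_ograde}, Lemma \ref{gornje_ograde3}, Lemma \ref{lema: donja na n u terminima b i c} and Lemma \ref{lemma: gornja_n_1} are fulfilled (using $a_1\ge 2$ to get $c>0.25a_1^2b^3\ge 0.56b^3$ and $b>10^5$). Using the interval estimates $a_2/4\le a_1<a_2/2$ and $a_2/2<a_2-a_1\le 3a_2/4$, I would bound the three log-arguments in Lemma \ref{lemma: gornja_n_1} by explicit constant multiples of powers of $a_2$ (roughly, the top ones are $\lesssim a_2^3c$ and $\lesssim a_2 c$ and the bottom one is $\gtrsim a_2^{-3}c$). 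Writing $b>ka_2$ with $k\ge 1$, after verifying threshold (\ref{ineq:provjera za c}) so that Corollary \ref{cor: primjena leme 3.4} applies, I would substitute the lower bound $c>0.25a_1^2b^3>k^3a_2^5/64$ into $\varphi(c)$, and compare with $n>0.09226(c/b^2)^{1/4}>0.09226(0.25a_1^2b)^{1/4}$. This produces a one-parameter inequality in $a_2$ and $k$ that fails for large $a_2$. Starting from $k=1$ (legitimate since $b>a_2$) and then iterating with $k=10^5/(\text{current upper bound on }a_2)$ using $b>10^5$, I expect, as in the proof of Theorem \ref{tm5.1}, to reduce to a list of a few thousand admissible pairs $\{a_1,a_2\}$.

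For those finitely many pairs I would then follow the method described at the end of Section \ref{sekcija_druga}: both $b$ and $c$ lie on a solution sequence of the Pellian equation (\ref{eq:pellova_konacno}) (or the degenerate difference-of-squares equation (\ref{eq:razlika_kv}) when $a_1a_2$ is a perfect square). For each remaining pair, Lemma \ref{lemma: gornja_n_1} gives an explicit upper bound $B(a_1,a_2)$ on $b$; one enumerates the sequence values of $b$ with $10^5<b<B(a_1,a_2)$, and for each such $b$ checks whether any later sequence element $c$ lies in the window $0.25a_1^2b^3<c<0.25a_2^2b^3$ of Theorem \ref{tm5.1}(2) and makes $bc+4$ a perfect square. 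The expected outcome, uniform with the earlier sections, is that no admissible pair $(b,c)$ survives.

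The main obstacle is the boundary regime $a_1\in\{2,3,\dots\}$ small, where $a_2\le 4a_1$ does not force $a_2$ to be large and the purely analytic bound from Lemma \ref{lemma: gornja_n_1} becomes loose; here, closing the argument depends entirely on the exhaustive Pell-sequence search, and the delicate point is to ensure that both the $a_1a_2$-non-square branch (\ref{eq:pellova_konacno}) and the $a_1a_2$-square branch (\ref{eq:razlika_kv}) of the analysis are implemented correctly, with the right upper bound on $b$ in each case.
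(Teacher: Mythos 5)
Your proposal is correct and follows essentially the same route as the paper's proof: the $a_1=1$ case is dispatched via Lemma \ref{lem:a1je1} together with $a_2\geq 3$, and for $a_1\geq 2$ the paper likewise assumes $2a_1<a_2\leq 4a_1$, bounds the log-arguments of Lemma \ref{lemma: gornja_n_1} by powers of $a_2$ using $a_1'=4(a_2-a_1)$ and $a_2-a_1\leq\frac34 a_2$, iterates the $b>ka_2$ substitution (obtaining $a_2\leq 15917$, then $a_2\leq 5179$), and finishes with the Pell-sequence search of Section \ref{sekcija_druga}. The only cosmetic difference is your (legitimate) use of the tighter window $c<0.25a_2^2b^3$ from Theorem \ref{tm5.1}(2) in the final search.
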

\begin{proof}
The statement for $a_1=1$ follows from Lemma \ref{lem:a1je1}. So, we may assume that $a_1\geq 2$. Also, let us assume to the contrary, that $2a_1 < a_2 \leq 4a_1$. Since $a_1\geq 2$, it is easy to check that the assumptions of Lemmas \ref{lemma: gornja_n_1} and \ref{lema: donja na n u terminima b i c} are satisfied. As we know $a_2 > 2a_1$, it follows that $a_1'=4(a_2-a_1)$. Also, from $a_2 \leq 4a_1$ we obtain $a_2-a_1 \leq \frac{3}{4}a_2$. Then
    \begin{align*}
 8.4034\cdot 10^{13}a_1^{1/2}(a_1')^{1/2}a_2^2c&<1.02921\cdot 10^{14}a_2^3c\\
 0.20533a_1^{1/2}a_2^{1/2}(a_2-a_1)^{-1}c&<0.29039c\\
 0.01685a_1(a_1')^{-1}a_2^{-1}(a_2-a_1)^{-2}c&>0.002496a_2^{-3}c.
\end{align*}
On the other hand, if we write $b>ka_2$ for some $k \geq 1$, we have $c > 0.015625a_2^5k^3$ and $c/b^2 \geq 1.5626 \cdot 10^3 a_2^2$. Since $0.01685^{-1}\cdot 4\cdot a_2(a_2-a_1)^3<238a_2^4$ and $c>0.25bk^2a_2^4>25000a_2^4$, Corollary \ref{cor: primjena leme 3.4} implies that $\varphi(c)$ is decreasing in $c$. So we can use lower bound $c > 0.015625a_2^5k^3$ and observe 
\begin{equation}\label{eq:cetvrta_njdnok_51}
    0.09226(1.5625\cdot 10^3a_2^2)^{1/4}<\frac{8\log(1.02921\cdot 10^{14}\cdot\frac{1}{64}a_2^8k^3)\log(0.29039\cdot\frac{1}{64}a_2^5k^3)}{\log(\frac{1}{64}a_2^6k^4)\log(0.002496 \cdot \frac{1}{64} a_2^2k^3)}.
\end{equation}
We have $b > a_2$. So we can take $k = 1$ in (\ref{eq:cetvrta_njdnok_51}) which yields $a_2\leq 15917$. Now, $k=10^5/15917$ implies $a_2 \leq 7478$, and after eight more iterations we get $a_2 \leq 5179$.

For those finitely many remaining pairs $\{a_1,a_2\}$ we repeat the process described in the proof of Theorem \ref{tm5.1} and conclude that there are no quadruples $\{a_1,a_2,b,c\}$ which satisfy all necessary conditions.
\end{proof}
Unfortunately, some computer calculations took more than 24 hours to complete. To reduce the remaining number of cases that need to be checked, we will divide our next proof into several steps. First, we will establish a weaker bound, and then we will use this bound to prove the final result.

\begin{theorem}
    Assume that $\{a_1,b,c,d\}$ and $\{a_2,b,c,d\}$ are $D(4)$-quadruples with $a_1<a_2<b<c<d$. Then 
       $$a_2>0.0251a_1^2.$$
   
\end{theorem}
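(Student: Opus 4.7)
The plan is to argue by contradiction and follow the same iterative scheme used in the proofs of Theorem \ref{tm5.1} and Proposition \ref{prop: prop1}, but now exploiting the hypothesis $a_2 \leq 0.0251 a_1^2$ to sharpen the term in Lemma \ref{lemma: gornja_n_1} whose denominator contains $a_1$. Combining $a_2 \leq 0.0251 a_1^2$ with $a_2 > 4a_1$ from Proposition \ref{prop: prop1} gives $a_2 \leq 0.0251(a_2/4)^2$, which forces $a_2 \geq 638$ and, more importantly, $a_1 \geq (a_2/0.0251)^{1/2}$. Since $a_2 > 2a_1$ we have $a_1' = 4(a_2-a_1)$, and $a_1 < a_2/4$ yields $(3/4)a_2 < a_2 - a_1 < a_2$.

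Next I would feed these relations into the three expressions appearing in $\varphi(c)$. Using $a_1 < a_2/4$ in the first two and the lower bound $a_1 \geq (a_2/0.0251)^{1/2}$ in the third, I expect estimates of the shape
\begin{align*}
8.4034 \cdot 10^{13} a_1^{1/2} (a_1')^{1/2} a_2^2 &< 8.4034 \cdot 10^{13} a_2^3,\\
0.20533 a_1^{1/2} a_2^{1/2} (a_2-a_1)^{-1} &< 0.13688,\\
0.01685 a_1 (a_1')^{-1} a_2^{-1} (a_2-a_1)^{-2} &> 0.02658 a_2^{-7/2},
\end{align*}
where only the last inequality uses the new information. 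Writing $b > k a_2$ with $k \geq 1$, Corollary \ref{cor: donja na c i d} together with $a_1^2 \geq a_2/0.0251$ gives $c > (0.25/0.0251) k^3 a_2^4 \approx 9.96 k^3 a_2^4$ and $c/b^2 > 9.96 \cdot 10^5 a_2$. The hypotheses of Lemma \ref{lema: donja na n u terminima b i c} and Corollary \ref{cor: primjena leme 3.4} are satisfied at this size, so I can substitute the lower bound for $c$ into $\varphi(c)$ (which is decreasing in $c$) and compare with $n > 0.09226(c/b^2)^{1/4}$, producing an inequality in $a_2$ and $k$ of the same form as \eqref{eq:treca_njdnok_51} and \eqref{eq:cetvrta_njdnok_51}.

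Starting from $k = 1$ (since $b > a_2$) and iterating the refinement $k \leftarrow 10^5/a_2$, I expect the admissible range of $a_2$ to shrink, after a handful of steps, down to at most a few thousand. For the finitely many surviving pairs $(a_1, a_2)$ with $4a_1 < a_2 \leq 0.0251 a_1^2$, I would apply the Pellian search described at the end of Section \ref{sekcija_druga}: when $a_1 a_2$ is a perfect square, \eqref{eq:razlika_kv} has only finitely many solutions and each is checked directly; otherwise Lemma \ref{lemma: gornja_n_1} supplies an explicit upper bound on $b$, and I would enumerate the solution sequences of \eqref{eq:pellova_konacno} to find all $b, c$ with $10^5 < b$, $0.25 a_1^2 b^3 < c < 0.25 a_2^2 b^3$, and $bc+4$ a square, expecting none to exist.

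The main obstacle is computational rather than conceptual: even after the sharpening gained from the extra hypothesis, the authors' remark that previous runs already exceeded 24 hours suggests that the pair count surviving the iterative step will still be substantial. This is presumably why Proposition \ref{prop: prop1} was proved first: its conclusion $a_2 > 4a_1$ both shrinks the range of pairs feeding into the present argument and guarantees $a_1' = 4(a_2-a_1)$, which is essential for the denominator bounds above to come out favorably.
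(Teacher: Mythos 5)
Your setup (contradiction, Proposition \ref{prop: prop1}, Lemma \ref{lemma: gornja_n_1} combined with Lemma \ref{lema: donja na n u terminima b i c}, Corollary \ref{cor: primjena leme 3.4}, then a finite search) is the right framework, but the quantitative core of your argument does not close, and this is exactly where the paper's proof does something different. By eliminating $a_1$ in favor of $a_2$ everywhere (keeping only $a_1\geq (a_2/0.0251)^{1/2}$ in the third estimate), your key denominator term becomes $\log(0.0266\,a_2^{-7/2}c)$, which with $c>9.96k^3a_2^4$ is only about $\tfrac12\log a_2+O(1)$, while the numerator logarithms grow like $7\log a_2$ and $4\log a_2$. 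The resulting right-hand side decays to a constant near $90$ very slowly, and the left-hand side $0.09226(c/b^2)^{1/4}\approx 2.9\,a_2^{1/4}$ only overtakes it around $a_2\approx 5\cdot 10^6$. Your proposed refinement $k\leftarrow 10^5/a_2$ is then vacuous, since it requires $a_2<10^5$ to give $k>1$; the iteration stalls at the first step. You would be left with on the order of $10^{12}$ pairs $(a_1,a_2)$, each demanding a Pellian search — not a "few thousand" values of $a_2$, and not feasible.

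The paper avoids this by exploiting that the hypothesis forces $a_1$ itself to be large ($a_1\geq 160$, and $a_1\geq 401$ in a preliminary stage), and by keeping $a_1$ as the main parameter: the three expressions are bounded by powers of $a_1$ (e.g. the third one from below by a constant times $a_1^{-7}$), and the argument splits into the cases $b\geq a_1^2$ and $b<a_1^2$. In the first case $c>0.25a_1^8$ makes every logarithm grow like a large multiple of $\log a_1$ against $n\gtrsim 0.065\,a_1$, which immediately yields $a_1\leq 682$ (resp.\ $552$) and then numerical bounds on $c$ and $b$; in the second case everything is recast in terms of $b$ and one gets $b\leq 64$ (resp.\ $162$), contradicting $b>10^5$ with no computation at all. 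The paper also bootstraps in two stages (first $a_2>0.01a_1^2$, then $a_2>0.0251a_1^2$) precisely to keep the surviving search space manageable. Without the case split on $b$ versus $a_1^2$ and without parametrizing by $a_1$, your version of the inequality is too weak for the concluding computer search to be carried out, so the proof as proposed is incomplete.
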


\begin{proof}    
We will first prove that $a_2>0.01a_1^2$. 
Assume to the contrary, that $a_2\leq 0.01a_1^2$. From Proposition \ref{prop: prop1} we have $a_2>4a_1$. This implies $a_1\geq 401$ and $a_2\geq 1605.$ 
It is easy to see that the assumptions of Lemmas \ref{lemma: gornja_n_1} and \ref{lema: donja na n u terminima b i c} are now satisfied.

 Notice that $3a_1<a_2-a_1<a_2\leq0.01a_1^2$. We obtain the following inequalities: 
    \begin{align*}
 8.4034\cdot 10^{13}a_1^{1/2}(a_1')^{1/2}a_2^2c&<1.6807\cdot 10^9a_1^{11/2}c\\
 0.20533a_1^{1/2}a_2^{1/2}(a_2-a_1)^{-1}c&<0.00685a_1^{1/2}c\\
 0.01685a_1(a_1')^{-1}a_2^{-1}(a_2-a_1)^{-2}c&>421250a_1^{-7}c,
\end{align*}
so we observe an inequality
\begin{equation}\label{eq:njdn_manja001}
    n<\frac{8\log(1.6807\cdot 10^{9}\cdot a_1^{11/2}c)\log(0.00685a_1^{1/2}c)}{\log(bc)\log(421250a_1^{-7}c)}.
\end{equation}

We will separate the proof in cases depending on relations between $b$ and $a_1^2$. 

\textbf{Case 1:} $b \geq a_1^2$. 

Here we have $c>0.25a_1^8$. The right-hand side of inequality (\ref{eq:njdn_manja001}) is decreasing in $c$ for $c>421250^{-1}a_1^7$ which obviously holds. Also, $n>0.09226\cdot 0.25^{1/4}a_1$. Applying all those inequalities to (\ref{eq:njdn_manja001}) yields $a_1\leq 552$. 

Now we can rewrite the inequality (\ref{eq:njdn_manja001}) and together with  $n>0.2465c^{1/12}$ 
obtain the numerical upper bound for  $c$, i.e. $c<2.2666\cdot 10^{24}$. This implies $b<2.086\cdot 10^{8}\cdot a_1^{-2/3}$. We consider these finitely many pairs $\{a_1,a_2\}$, $a_2\leq 0.01a_1^2$ and a computer search for extensions within these bounds for $b$ and $c$ returns that there are no such numbers.

\textbf{Case 2:} $b<a_1^2$. 

Observe that, since $a_1<a_2/4<b/4$, $\frac{3}{4}a_2<a_2-a_1<a_2\leq 0.01a_1^2$ and $0.25b^4<c<0.25b^5$ we have
   \begin{align*}
 8.4034\cdot 10^{13}a_1^{1/2}(a_1')^{1/2}a_2^2c&<8.4034\cdot 10^{13}b^3c<2.101\cdot 10^{13}b^8\\
 0.20533a_1^{1/2}a_2^{1/2}(a_2-a_1)^{-1}c&<0.137c<0.03425b^5\\
 0.01685a_1(a_1')^{-1}a_2^{-1}(a_2-a_1)^{-2}c&>1.725\cdot 10^7b^{-7}c>4.3125\cdot 10^6b^{-3}. 
\end{align*}

Also, $n>0.09226b^{-1/2}c^{1/4}>0.09226\cdot 0.25^{1/4}b^{1/2}$. Applying this to inequalities from Lemmas \ref{lemma: gornja_n_1} and \ref{lema: donja na n u terminima b i c} yields $b\leq 162$ which is a contradiction. Thus, we have proved that $a_2>0.01a_1^2$.

Assume now that $a_2 \leq 0.0251a_1^2$. From Proposition \ref{prop: prop1} we have $a_2>4a_1$ and get $a_1\geq 160$ and $a_2 > 4\cdot 160=640$. It is easy to see that the assumptions of Lemmas \ref{lemma: gornja_n_1} and \ref{lema: donja na n u terminima b i c} are satisfied.

We follow the steps from the first part of the proof.
    Notice that $0.003a_1^2<a_2-a_1<a_2\leq0.0251a_1^2$ holds. 
so we obtain an inequality
\begin{equation}\label{eq:njdn_manja00251}
    n<\frac{8\log(1.678\cdot 10^{10}a_1^{11/2}c)\log(0.3286a_1^{-1/2}c)}{\log(bc)\log(10613a_1^{-7}c)}.
\end{equation}

As before, we consider two cases.

\textbf{Case 1:} $b \geq a_1^2$. Since the same argument holds as before, 
 we can use $c>10613^{-1}a_1^7$ and $n>0.09226\cdot 0.25^{1/4}a_1$. Applying all those inequalities yields $a_1\leq 682$. 

Now we can rewrite the inequality (\ref{eq:njdn_manja00251}) by inserting numerical bounds for $a_1$ and together with $n>0.2465c^{1/12}$ obtain $c<3.06\cdot 10^{24}$ and $b<2.205\cdot 10^{8}\cdot a_1^{-2/3}$. We consider the finitely many pairs $\{a_1,a_2\}$ which satisfy $ 0.01a_1^2<a_2\leq 0.0251a_1^2$ and search for extensions within the bounds for $b$ and $c$. A computer search as described before returns that there is no $c$ within the given bounds.

\textbf{Case 2:} $b<a_1^2$. 
Because of $b>10^5$, this assumption yields $a_1 \geq 317$.

Observe that, since $a_1<a_2/4<b/4$, $0.007b<0.007a_1^2<a_2-a_1\leq 0.0251a_1^2$ for $a_1\geq 317$, and $0.25b^4<c<0.25b^5$ we have
   \begin{align*}
 8.4034\cdot 10^{13}a_1^{1/2}(a_1')^{1/2}a_2^2c&<8.4034\cdot 10^{13}b^3c<2.101\cdot 10^{13}b^8\\
 0.20533a_1^{1/2}a_2^{1/2}(a_2-a_1)^{-1}c&<14.5785c<3.645b^5\\
 0.01685a_1(a_1')^{-1}a_2^{-1}(a_2-a_1)^{-2}c&>1.09\cdot 10^6b^{-7}c>272500b^{-3}. 
\end{align*}
Also, $n>0.09226b^{-1/2}c^{1/4}>0.09226\cdot 0.25^{1/4}b^{1/2}$. Applying this to inequalities from Lemmas \ref{lemma: gornja_n_1} and \ref{lema: donja na n u terminima b i c} yields $b\leq 64$ which is a contradiction. 
\end{proof}

\section{Upper bound for $b$ }\label{sekcija_bound_2}

Now we wish to establish further relations between elements $a_2$ and $b$. 

When $a_1=1$, we have $c>0.25a_1^2b^3=0.25b^3$ and that lower bound does not satisfy Lemma \ref{lema: donja na n u terminima b i c}, so we prove the next theorem only when $a_1\geq 2$.
\begin{theorem}
    Assume that $\{a_1,b,c,d\}$ and $\{a_2,b,c,d\}$ are $D(4)$-quadruples with $a_1<a_2<b<c<d$. 
        If $2\leq a_1$, then $b<a_2^2$.
\end{theorem}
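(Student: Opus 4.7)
The plan is to argue by contradiction, following the template established in Sections \ref{sekcija_upper_c} and \ref{sekcija_bound_1}. Assume $b\geq a_2^2$. Since $a_1\geq 2$, Proposition \ref{prop: prop1} gives $a_2>4a_1\geq 8$, and from $c>0.25a_1^2b^3\geq b^3$ together with $b>10^5$, both hypotheses $c>0.56b^3$ and $c>0.001a_2^2b^2$ of Lemma \ref{lema: donja na n u terminima b i c} are satisfied automatically, and the inequality $c>0.01685^{-1}a_1^{-1}a_1'a_2(a_2-a_1)^2$ needed to apply Corollary \ref{cor: primjena leme 3.4} is also easy. Hence Lemmas \ref{lemma: gornja_n_1} and \ref{lema: donja na n u terminima b i c} apply and $\varphi(c)$ is decreasing in $c$.

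From $a_2>4a_1$ we have $a_1'=4(a_2-a_1)$ with $\tfrac{3}{4}a_2<a_2-a_1<a_2$, so, exactly as in the proof of Proposition \ref{prop: prop1}, each of the three logarithm arguments inside $\varphi(c)$ is bounded by a simple monomial in $a_2$ and $c$ times an absolute constant. Writing $b>ka_2$ with $k\geq 1$, the assumption $b\geq a_2^2$ forces $k\geq a_2$, and substitution into $c>0.25a_1^2b^3$ yields a lower bound of the shape $c>C_1k^3a_2^5$ (using $a_1\geq 2$). Plugging this into the now-decreasing expression from Lemma \ref{lemma: gornja_n_1} and comparing with the lower bound $n>0.09226\,b^{-1/2}c^{1/4}\geq C_2k^{1/4}a_2^{1/4}$ from Lemma \ref{lema: donja na n u terminima b i c} (for odd $n$; the even-$n$ bound is stronger) gives an inequality in $a_2$ alone. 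Starting with $k=a_2$ and successively enlarging $k$ by means of $b>10^5$, as in the bootstrap arguments in the proofs of Theorem \ref{tm5.1} and Proposition \ref{prop: prop1}, one arrives at a modest numerical upper bound for $a_2$.

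For each of the finitely many remaining pairs $\{a_1,a_2\}$ with $2\leq a_1<a_2/4$ and $a_2$ below that bound, I proceed exactly as in Section \ref{sekcija_druga}. Using $b\geq a_2^2$ in $c>0.25a_1^2b^3$ and reinserting into Lemma \ref{lemma: gornja_n_1} gives a numerical upper bound $B_0$ for $b$. Within the range $a_2^2\leq b\leq B_0$, I enumerate the values of $b$ lying in the solution sequence of the Pellian equation \eqref{eq:pellova_konacno} associated with $\{a_1,a_2\}$ (or the finitely many $b$ from \eqref{eq:razlika_kv} when $a_1a_2$ is a perfect square), and for each such $b$ I check, again by running through the Pellian sequence for the pair $\{a_1,b\}$, whether any $c$ in the allowed window $0.25a_1^2b^3<c$ with $bc+4$ a perfect square arises. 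The expectation is that no such $c$ exists in any of these cases, producing the required contradiction.

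The main obstacle is computational rather than conceptual: the proofs of Theorem \ref{tm5.1} and Proposition \ref{prop: prop1} already flagged that the Pellian enumeration becomes expensive once the bound on $a_2$ is loose, so the bootstrap must be iterated enough times to drive the bound low enough that the final search terminates in reasonable time. A secondary technical point is checking carefully, when simplifying the expressions in $\varphi(c)$, that the bounds used (such as $c>C_1k^3a_2^5$ and the lower bound on $n$) remain effective across the full range $k\geq a_2\geq 9$; however, these estimates are entirely analogous to those already carried out in the previous two sections.
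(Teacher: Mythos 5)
Your overall strategy is the same as the paper's: assume $b\geq a_2^2$, verify the hypotheses of Lemmas \ref{lemma: gornja_n_1} and \ref{lema: donja na n u terminima b i c}, play the logarithmic upper bound for $n$ against the polynomial lower bound $n\gg b^{-1/2}c^{1/4}$ to force $a_2$ below an explicit threshold, and finish with a Pellian enumeration over the finitely many remaining pairs. Two remarks on where you diverge. First, the paper does not run one uniform argument: it splits into $a_1\leq 159$ (where it works with $b$ as the main variable, obtaining $b<1.846\cdot 10^9$ and $a_2\leq 42965$, with per-$a_1$ refinements for $a_1\in[8,159]$) and $a_1\geq 160$ (where it invokes the quadratic bound $a_2>0.0251a_1^2$ from Section \ref{sekcija_bound_1} to get $a_2\geq 643$, first proves the intermediate inequality $b<3a_2^2$, and only then kills $b\geq a_2^2$, ending with $a_2\leq 1140$ and $b<3.9\cdot 10^6$). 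These refinements exist precisely to make the final computer search tractable; your single-pass bound of order $a_2\lesssim 4\cdot 10^4$ leaves on the order of $10^8$ pairs $(a_1,a_2)$, each requiring a Pellian search, which the paper's own remarks about runtime suggest is not feasible as stated. Second, your proposed bootstrap ``successively enlarging $k$ by means of $b>10^5$'' is vacuous here: once $k\geq a_2$ you already have $b>ka_2\geq a_2^2$, which exceeds $10^5$ for all $a_2\geq 317$, so the inequality $b>10^5$ yields no improvement; the leverage comes entirely from the $a_2^{1/2}$ growth of the lower bound for $n$, and the paper accordingly does not iterate in this proof. (Minor: with $b>ka_2$ and $a_1\geq 2$ one gets $c>k^3a_2^3\geq ka_2^5$, not $k^3a_2^5$; this does not affect the outcome.) None of this is a mathematical gap, but to make the final enumeration finish you should adopt the paper's case split and the intermediate step $b<3a_2^2$.
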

\begin{proof}
 The proof can be split into two cases. In the first case, when $a_1\leq 159$, we observe that $a_2 > 4a_1$, which gives a better lower bound for $a_2$, and if $a_1\geq 160$, a better lower bound for $a_2$ is given by $a_2 > 0.0251a_1^2$.

\textit{ Case $a_1\leq 159$}:
Assume to the contrary, that $b\geq a_2^2$. Then $a_2\leq b^{1/2}$ and $a_1<0.25b^{1/2}$. Since $a_1\geq 2$ it is easy to check that the assumptions of Lemmas \ref{lemma: gornja_n_1} and \ref{lema: donja na n u terminima b i c} are satisfied. It holds
 \begin{align*}
 8.4034\cdot 10^{13}a_1^{1/2}(a_1')^{1/2}a_2^2c&<8.4034\cdot 10^{13}b^{3/2}c,\\
 0.20533a_1^{1/2}a_2^{1/2}(a_2-a_1)^{-1}c&<0.0153b^{1/2}c,\\
 0.01685a_1(a_1')^{-1}a_2^{-1}(a_2-a_1)^{-2}c&>0.0000115c, 
\end{align*}
where we have used $a_2\geq 9$ and $\frac{3}{4}a_2<a_2-a_1<a_2$. Also $n>0.09226\sqrt{0.5}b^{1/4}$. Now we observe
$$0.09226\sqrt{0.5}b^{1/4}<\frac{8\log(8.4034\cdot 10^{13}b^{3/2}c)\log(0.0153b^{1/2}c)}{\log(bc)\log(0.0000115c)},$$
and the right-hand side is a function decreasing in $c$ since $c>0.0000115^{-1}$. So, we can use a lower bound $c>0.25a_1^2b^3\geq b^3$ in this inequality and obtain $b<1.846\cdot 10^9$. This implies $a_2\leq 42965$. We can improve this upper bound for $a_2$ furthermore by observing for each fixed $a_1\in[8,159]$ a similar inequality with $a_2$. However, for values $a_1\in[2,7]$, this approach yields a larger bound compared to $42965$.   We do a computer search, and it returns there don't exist elements $b$ and $c$ which satisfy all of the conditions. 

\textit{ Case $a_1\geq 160$}: If $a_1\geq 160$ then $a_2\geq 643$. We use $a_2>0.0251a_1^2$, so $a_1<\frac{a_2^{1/2}}{\sqrt{0.0251}}$ and $a_2-a_1>a_2\left(1-\frac{1}{\sqrt{0.0251a_2}}\right)>0.751a_2$.

We will first prove that $b<3a_2^2$ and then we will prove that $b<a_2^2$. Thus, we will assume to the contrary that $b\geq 3a_2^2$. To generalize our approach, let's assume that $b\geq ka_2^2$. As before, it is easy to see that Lemmas \ref{lemma: gornja_n_1} and \ref{lema: donja na n u terminima b i c} hold. We have $c>0.25\cdot 160^2k^3a_2^6=6400k^3a_2^6$, $c/b^2>6400ka_2^2$  and $n>0.09226\cdot(6400k)^{1/4}a_2^{1/2}$. It is easy to see from Corollary \ref{cor: primjena leme 3.4} that we can observe the inequality
$$0.09226\cdot(6400k)^{1/4}a_2^{1/2}<\frac{8\log(2.7136\cdot 10^{18}\cdot k^3\cdot a_2^{35/4})\log(4599.392\cdot k^3\cdot a_2^{23/4})}{\log(6400\cdot k^4\cdot a_2^8)\log(4313.6\cdot k^3\cdot a_2^2)},$$
which yields $a_2\leq 533$ for $k=3$. This is a contradiction to $a_2\geq 643$, so $b<3a_2^2$. 

Now we assume that $b\geq a_2^2$ holds. Previous inequality for $k=1$ yields $a_2\leq 1140$ which implies $a_1\leq 231$. 
Then we also have $b<3a_2^2\leq 3898800$. 

Again, we consider finitely many pairs $\{a_1,a_2\}$ and search for $b$ and $c$ such that $\max\{10^5+1,a_2^2\}\leq b<3a_2^2$ and $0.25a_1^2b^3<c<0.25a_2^2b^3$, and a computer search returns that none such $D(4)$-pair $\{b,c\}$ exists.
\end{proof}

It is now easy to finish the proof of statement 3) of Theorem \ref{glavni}. Since $10^5<b<a_2^2$, when $a_1\geq 2$, we have $a_2\geq 317.$

\section{Proof of Corollary \ref{cor: konacno}}

First, we will prove a numerical upper bound for $c$ in an irregular $D(4)$-quadruple $\{a,b,c,d\}$. The proof follows \cite[Proposition 4]{duje_kon} and \cite[Lemma 13]{fil_xy4}. In the next lemma $(v_m)$ and $(w_n)$ denote sequences of solutions described as in (\ref{niz vm}) and (\ref{niz wm}) but with $a$ instead of $a_2$, i.e. as in \cite{mbt}. 
\begin{lemma}\label{lema: nume_c}
    Let $\{a,b,c,d\}$ be a $D(4)$-quadruple such that $a<b<c<d_+<d$,  $c>b^{2.8}$, $b>\max\{4a,10^5\}$. Then $c<10^{2157}$ and $m<3.65\cdot 10^{21}$.
\end{lemma}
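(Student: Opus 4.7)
The plan is to prove the lemma by combining an upper bound on $m$ coming from Matveev's theorem on linear forms in three logarithms with the lower bound on $m$ in terms of $b$ and $c$, in the same spirit as \cite[Proposition 4]{duje_kon} and \cite[Lemma 13]{fil_xy4}.

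First, using the closed-form expressions for the recurrences (\ref{niz vm}) and (\ref{niz wm}) (now taken with $a$ in place of $a_2$), the identity $v_m = w_n$ rearranges into a small linear form
$$\Lambda := m\log\alpha - n\log\beta + \log\gamma,$$
where $\alpha=(s+\sqrt{ac})/2$, $\beta=(t+\sqrt{bc})/2$ are the fundamental units of the two Pell equations and $\gamma$ is an algebraic number in $\mathbb{Q}(\sqrt{a},\sqrt{b},\sqrt{c})$ built from the initial data $z_{(0)}, x_{(0)}, z_{(1)}, y_{(1)}$. Standard estimates for the dominant term of $v_m$ give $|\Lambda| < C_1\,\alpha^{-2m}$, while Matveev's theorem (with three logarithms) yields a lower bound of the form
$$\log|\Lambda| > -C_2 \log B \cdot \log(ac) \cdot \log(bc) \cdot h(\gamma),$$
with $B = \max\{m,n\}$ and $h(\gamma) = O(\log c)$.

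Second, since $b > 4a$ and $c > b^{2.8}$, both $\log(ac)$ and $\log(bc)$ are controlled by a small constant multiple of $\log c$, and Lemma \ref{lema: n vece 7} gives $m \le 2n+1$, so $\log B$ is comparable to $\log m$. Combining the trivial upper bound $\log|\Lambda| < -2m\log\alpha + \log C_1$ with the Matveev lower bound and using $\log\alpha > \tfrac{1}{2}\log(ac)$ produces an inequality of the shape
$$m < C_3 (\log c)^3 \log m.$$

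Third, the analog of Lemma \ref{lema: donja na n u terminima b i c} valid for a single irregular $D(4)$-quadruple (the content of \cite[Lemma 4.1]{mbt}) combined with $c > b^{2.8}$ yields a lower bound $m > C_4\, c^{\delta}$ for an explicit exponent $\delta > 0$ depending on the parity of $m$. Substituting into $m < C_3(\log c)^3 \log m$ collapses the two variables into a transcendental inequality in $c$ alone, of the shape $c^{\delta} < C_5 (\log c)^4$, whose numerical resolution produces $c < 10^{2157}$. Plugging this bound back into the upper estimate on $m$ then gives $m < 3.65 \cdot 10^{21}$.

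The main obstacle will be the careful bookkeeping of constants through the application of Matveev's theorem, namely choosing the quantities $A_1,A_2,A_3$ controlling the heights of $\alpha$, $\beta$, $\gamma$ so as to produce exactly the constants needed for the numerical bounds $10^{2157}$ and $3.65\cdot 10^{21}$. This is standard but tedious, and the specific choices follow closely the templates established in \cite{duje_kon} and \cite{fil_xy4}.
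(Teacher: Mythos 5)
Your overall architecture --- a lower bound for $m$ growing like a power of $c$, played against an upper bound for $m$ of size $O(\log^2 c\cdot\log m)$ coming from linear forms in logarithms --- is the same as the paper's. For the upper bound you do not need to redo Matveev from scratch: the paper simply quotes \cite[Proposition 5.4]{mbt}, which states $m/\log(38.92(m+1))<2.7717\cdot10^{12}\log^2 c$ and is essentially what your three-logarithm computation would reprove (with worse constants and an extra factor of $\log c$, which would still suffice but would spoil the stated numerical bounds).

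The genuine gap is in your third step, the lower bound $m>C_4c^{\delta}$. You propose to import it from \cite[Lemma 4.1]{mbt}, i.e.\ the single-quadruple analogue of Lemma \ref{lema: donja na n u terminima b i c}. But that gap principle is proved under the hypothesis $c>0.56b^3$ (needed both to run the even case through \cite[Lemma 4.1]{mbt} with its parameter $c_0=0.56\cdot10^{15}$ and, in the odd case, to force the relevant congruences mod $c$ into equalities), whereas here you only have $c>b^{2.8}$. Moreover, the exponent you would extract from $n>0.09226\,b^{-1/2}c^{1/4}$ together with $b<c^{1/2.8}$ is about $\delta\approx0.071$, and the paper's numerology $10^{2157}=(10^{21.57})^{100}$ betrays that the bound actually available is only $n>c^{0.01}$. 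The entire first (and longest) part of the paper's proof is devoted to establishing precisely this weaker bound under the weaker hypothesis $c>b^{2.8}$: assuming $n\le c^{0.01}$, the congruences from \cite[Lemma 13]{fil_xy4} (in both parity cases \eqref{rj_1} and \eqref{rj_2}) become equalities because both sides are smaller than $c$, and, following \cite[Lemma 11]{duje_kon}, those equalities force a ratio of $m$ to $n$ incompatible with the bound $m<1.36n+1$ from Lemma \ref{lemma: epsilon}. Without supplying this step (or a valid substitute), your argument does not close; with it, your final inequality becomes $c^{0.01}\lesssim\log^2 c\cdot\log m$ with $c<m^{100}$, which is exactly how the paper obtains $m<3.65\cdot10^{21}$ and $c<10^{2157}$.
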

\begin{proof}
Assume that $d>d_+$.    First, we will prove $n>c^{0.01}$.
    Assume to the contrary, that $n\leq c^{0.01}$.  Notice that for $c>b^{2.8}$ we have from Lemma \ref{lemma: epsilon} that $m<1.36n+1$. Also, since $b>10^5$ we have $c>10^{13}$. From Lemma \ref{lema: n vece 7} we know $n\geq 7$. 

    The only cases we have to consider are (\ref{rj_1}) and (\ref{rj_2}), where $m$ and $n$ are of the same parity. Then  from \cite[Lemma 5]{fil_xy4} we have $n\leq m$.

    \textbf{1)} Consider the case (\ref{rj_1}), where $v_{2m'}=w_{2n'}$  and $|z_0|=2$, $z_0=z_1$. \\
    We use $n'<c^{0.01}$.
As in \cite[Lemma 13]{fil_xy4}, the congruence 
$$\pm am'^2+sm'\equiv \pm bn'^2+tn'\ (\bmod \ c)$$
is an equality since $\max\{am'^2,sm',bn'^2,tn'\}<0.01c$. Hence, $\pm am'^2+sm'= \pm bn'^2+tn'.$
As in \cite[Lemma 11]{duje_kon}, by using $b>4a$ and $c>10^{13}$ it is easy to see that 
$$\frac{m}{n}=\frac{2m'}{2n'}>\frac{t}{s}\cdot \frac{1-\frac{bn'^2}{tn'}}{1+\frac{am'^2}{sm'}}>\sqrt{\frac{b}{a}}\cdot\frac{0.999}{\sqrt{1.001}\cdot1.001}>1.99,$$
which is a contradiction to $m<1.36n+1$.
   \par \textbf{2)} Consider the case (\ref{rj_2}), where $v_{2m'+1}=w_{2n'+1}$, $|z_0|=t$, $|z_1|=s$ and $z_0z_1>0$.\\
   As in \cite[Lemma 13]{fil_xy4}, we see that congruence 
   $$\pm \frac{1}{2}(astm'(m'+1)-bstn'(n'+1))\equiv 2r(n'-m')\ (\bmod \ c)$$
   holds. If we square it and use $s^2\equiv t^2\equiv 4\ (\bmod \ c)$ we get
   $$(2am'(m'+1)-2bn'(n'+1))^2\equiv 4r^2(n'-m')^2\ (\bmod \ c).$$
   It is easy to see that both sides are less than $c$, so we have equality, i.e.
   $$am'(m'+1)-bn'(n'+1)= \pm r(n'-m').$$
   As in \cite[Lemma 11]{duje_kon}, we use this to show
   \begin{equation}\label{jdb: 71}
   \frac{2n'+1}{2m'+1}-\sqrt{\frac{a}{b}}=\frac{\pm 4r(n'-m')+b-a}{\sqrt{b}(2m'+1)(\sqrt{b}(2n'+1)+\sqrt{a}(2m'+1))}.
   \end{equation}
   Denote the expression on the right-hand side of (\ref{jdb: 71}) by $A$. By substituting $m=2m'+1$ and $n=2n'+1$ into the inequality $m<1.36n+1$, we obtain $m'<1.36n'+0.68$. Now
   \begin{align*}
   A& <\frac{4r(m'-n')}{b(2m'+1)(2n'+1)}+\frac{b-a}{b(2m'+1)(2n'+1)} \\
   &<\frac{0.36n'+0.68}{2n'+1}\cdot 2.01\cdot \frac{1}{2m'+1}+\frac{1}{(2m'+1)(2n'+1)},
   \end{align*}
   where we have used $4r=4\sqrt{ab+4}<\sqrt{\frac{b^2}{4}+4}<2.01b$. 
      Inequality $m'<1.36n'+0.68$ also implies,
   $$\frac{2n'+1}{2m'+1}>\frac{2n'+1}{2.72n'+2.36}.$$
   Denote the right-hand side of this inequality by $B$. Since $n\geq 7$ implies $n'\geq 3$ then $B>0.65$ and $A<0.26\cdot 2.01\cdot \frac{1}{7}+\frac{1}{49}<0.1$ which yields $$0.65<\frac{2n'+1}{2m'+1}<0.5+0.1,$$
   a contradiction.

   Hence, we have $n>c^{0.01}$ when $d>d_+$. From \cite[Proposition 5.4]{mbt} we have
\begin{equation}\label{njb_log_m}
\frac{m}{\log(38.92(m+1))}<2.7717\cdot10^{12}\log^2c.
\end{equation}
Since $c^{0.01}<n\leq m$, i.e. $c<m^{100}$ then
$$\frac{m}{\log(38.92(m+1))\log^2(m)}<2.7717\cdot10^{16}.$$
This implies $m<3.65\cdot 10^{21}$ and $c<m^{100}<10^{2157}$.
\end{proof}

\begin{proof}[Proof of Corollary \ref{cor: konacno}]
Let $\{a_1,b,c,d\}$ and $\{a_2,b,c,d\}$ be $D(4)$-quad\-ru\-ples. From Corollary \ref{cor: nereg} we know that both $D(4)$-quadruples are irregular, so Lemma \ref{lema: nume_c} implies $c<10^{2157}$ and $m<3.65\cdot 10^{21}.$
Since $b>10^5$ and $b>a_2$, we have $c>0.25a_1^2b^3>b^{2.8}>a_2^{2.8}$ , so $s_2=\sqrt{a_2c+4}<c^{7/10}$. From the proof of \cite[Lemma 5]{fil_xy4} we see $v_m<cx_0s_2^{m-1}$. Now  \cite[Lemma 2.5]{mbt} implies 
 \begin{align*}
 v_m&<cx_0s_2^{m-1}<1.00317c\sqrt[4]{a_2c}s_2^{m-1}<1.00317c\sqrt[4]{c^{1/2.8}\cdot c}\cdot c^{\frac{7}{10}m-\frac{7}{10}}\\
 &<1.00317c^{0.64}\cdot c^{\frac{7}{10}m}
 <10^{10^{25}}.
 \end{align*}
 Since $v_m^2=cd+4>d$ we have $d<10^{10^{26}}$, which proves our statement -- there are only finitely many $D(4)$-triples $\{b,c,d\}$, $b<c<d$, which can be extended to two different $D(4)$-quadruples with smaller elements $a_1<a_2<b$.
\end{proof}

\begin{rem}
For further research, it would be interesting to  establish some better bounds for elements,  similar to those presented in \cite{cdf2}. The first step would be to prove a lower bound for $b$ in the form of $b > ka_2$, where $k > 1$. The key concept, observed in \cite{cdf2}, is to exclude solutions $c=c_{\nu}^{\pm }$ when $b\leq ka_2$. The sequence $\{c_{\nu}^{\pm }\}_{\nu}$ refers to a specific sequence of solutions to a Pellian equation that emerges in the problem of extending a $D(4)$-pair $\{a,b\}$ to a $D(4)$-triple $\{a,b,c\}$. An explicit formula for this sequence can be found, for example, in \cite{mbt}. However, in the case of $D(4)$-$m$-tuples, a similar approach cannot be employed since there exist multiple $c$ values of this form which fall within the bounds established in this paper. Consequently, alternative methods need to be devised to either eliminate the possibility of such $c$ values or demonstrate that $b > ka_2$ for some $k > 1$.
\end{rem}


\begin{thebibliography}{9}

\bibitem{bf} Lj.~Ba\'{c}i\'{c}, A.~Filipin, {\it On the extensibility of $D(4)$-pair $\{k-2,k+2\}$}, {  J.~Comb.~Number Theory} \textbf{5} (2013), 181--197.

\bibitem{mbt} M. Bliznac Trebješanin, {\it Extension of a Diophantine triple with the property $D(4)$}, {  Acta Math.~Hungar.} \textbf{163} (2021), 213–-246.

\bibitem{mbtap1} M. Bliznac Trebješanin, {\it $D(4)$-triples with two largest elements in common}, Math. Slovaca, \textbf{73(2)} (2023), 343--352.

\bibitem{nas2} M.~Bliznac Trebje\v{s}anin, A.~Filipin, {\it Nonexistence of $D(4)$-quintuples}, {  J.~Number Theory}, \textbf{194} (2019), 170--217.

\bibitem{cdf} M. Cipu, A. Dujella, Y. Fujita, {\it Diophantine triples with largest two elements in common}, {  Period. Math. Hungar.} \textbf{82} (2021), 56--68.

\bibitem{cdf2} M. Cipu, A. Dujella, Y. Fujita, {\it Extensions of a Diophantine triple by adjoining smaller elements}, Mediterr. J. Math. \textbf{19} (2022), Article 187, (20pp)

\bibitem{duje_kon} A.~Dujella, {\it There are only finitely many Diophantine quintuples}, {  J.~Reine Angew.~ Math.~} \textbf{566} (2004), 183--214.


\bibitem{dujram} A.~Dujella, A.~M.~S.~Ramasamy, {\it Fibonacci numbers and sets with the property $D(4)$}, {  Bull.~Belg.~Math.~Soc.~ Simon Stevin}, \textbf{12(3)} (2005), 401--412.

\bibitem{sestorka} A.~Filipin, {\it There does not exist a $D(4)$-sextuple}, {  J.~Number Theory} \textbf{128} (2008), 1555--1565.

\bibitem{fil_xy4} A.~Filipin,{ \it On the size of sets in which $xy + 4$ is always a square}, {  Rocky Mountain J.~ Math.~} \textbf{39} (2009), no.~ 4, 1195--1224.

\bibitem{fil_par} A.~Filipin, {\it The extension of some $D(4)$-pairs}, {  Notes Number Theory Discrete Math.~}\textbf{23} (2017), 126--135.

\bibitem{fht} A.~Filipin, B.~He, A.~Togb\'{e}, {\it  On a family of two-parametric D(4)-triples}, {  Glas.~Mat.~ Ser.~ III} \textbf{47} (2012), 31--51.

\bibitem{petorke} B.~He, A.~Togb\'{e}, V.~Ziegler, {\it There is no Diophantine quintuple},   Trans.~Amer. ~Math. ~Soc. \textbf{371} (2019), 6665--6709.
 

\bibitem{rickert} J.~H.~Rickert, {\it Simultaneous rational approximations and related Diophantine equations}, {  Proc.~Cambridge Philos.~Soc.~} \textbf{113} (1993) 461--472.

\end{thebibliography}
\end{document}